\newcommand{\ov}{\overline}
\newcommand{\rsp}{\raisebox{0em}[2ex][1.3ex]{\rule{0em}{2ex} }}
\newcommand{\Cl}{{\operatorname{Cl}}}
\newcommand{\disc}{{\operatorname{disc}}}
\newcommand{\GL}{{\operatorname{GL}}}
\newcommand{\SL}{{\operatorname{SL}}}
\newcommand{\cA}{\mathcal A}
\newcommand{\cP}{{\mathcal P}}
\newcommand{\Z}{{\mathbb Z}}
\newcommand{\Q}{{\mathbb Q}}
\newcommand{\R}{{\mathbb R}}
\newcommand{\eps}{{\varepsilon}}
\newcommand{\lra}{\longrightarrow}
\newcommand{\Lra}{\Longrightarrow}
\newcommand{\sym}[2]{\big(\frac{#1}{#2}\big)_4^{\phantom{p}}}
\newfont{\cyr}{wncyb10}
\newcommand{\sha}{\mbox{\cyr Sh}}
\newcommand{\matr}[4]{\Big(\,\begin{matrix} 
             #1 & #2 \\ #3 & #4 \end{matrix}\, \Big)}
\newcommand{\smatr}[4]{(\begin{smallmatrix} 
             #1 & #2 \\ #3 & #4 \end{smallmatrix})}
\newcommand{\cube}[8]{%
\begin{picture}(90,70)(0,0)
\put(0,0){\makebox(0,0){$#3$}}
\put(0,45){\makebox(0,0){$#1$}}
\put(45,0){\makebox(0,0){$#4$}}
\put(45,45){\makebox(0,0){$#2$}}
\put(0,8){\line(0,1){30}}
\put(5,0){\line(1,0){30}}
\put(45,8){\line(0,1){28}}
\put(5,45){\line(1,0){30}} 
\put(22,20){\makebox(0,0){$#7$}}
\put(22,65){\makebox(0,0){$#5$}}
\put(65,20){\makebox(0,0){$#8$}}
\put(65,65){\makebox(0,0){$#6$}}
\put(5,5){\line(1,1){10}}
\put(22,29){\line(0,1){10}}
\put(22,49){\line(0,1){10}}
\put(65,29){\line(0,1){28}}
\put(50,5){\line(1,1){10}}
\put(5,50){\line(1,1){10}}
\put(50,49){\line(1,1){10}}
\put(30,65){\line(1,0){25}}
\put(30,20){\line(1,0){10}}
\put(48,20){\line(1,0){7}}
\end{picture}}
\newcommand{\cubeA}[9]{%
\begin{picture}(90,65)(0,0)
\put(0,0){\makebox(0,0){$#3$}}
\put(0,45){\makebox(0,0){$#1$}}
\put(45,0){\makebox(0,0){$#4$}}
\put(45,45){\makebox(0,0){$#2$}}
\put(0,8){\line(0,1){30}}
\put(5,0){\line(1,0){30}}
\put(45,8){\line(0,1){28}}
\put(5,45){\line(1,0){30}} 
\put(22,20){\makebox(0,0){$#7$}}
\put(22,65){\makebox(0,0){$#5$}}
\put(65,20){\makebox(0,0){$#8$}}
\put(65,65){\makebox(0,0){$#6$}}
\put(5,5){\line(1,1){10}}
\put(22,29){\line(0,1){10}}
\put(22,49){\line(0,1){10}}
\put(65,29){\line(0,1){28}}
\put(50,5){\line(1,1){10}}
\put(5,50){\line(1,1){10}}
\put(50,49){\line(1,1){10}}
\put(30,65){\line(1,0){25}}
\put(30,20){\line(1,0){10}}
\put(48,20){\line(1,0){7}}
 \put(-20,30){\makebox(0,0){$#9 = $}}
\end{picture}}
\newtheorem{thm}{Theorem}
\newtheorem{prop}{Proposition}
\newtheorem{lem}{Lemma}
\newtheorem{cor}{Corollary}
\title[Binary Quadratic Forms and the Hasse Principle]
      {Binary Quadratic Forms and Counterexamples to Hasse's
        Local-Global Principle}
\author{F. Lemmermeyer}
\begin{document}

\begin{abstract}
After a brief introduction to the classical theory of binary 
quadratic forms we use these results for proving (most of) the
claims made by P\'epin in a series of articles on unsolvable
quartic diophantine equations, and for constructing families 
of counterexamples to the Hasse Principle for curves of genus $1$ 
defined by equations of the form $ax^4 + by^4 = z^2$.
\end{abstract}

\maketitle
\markboth{Binary Quadratic Forms}{\today}
\begin{center} \today \end{center}

\section*{Introduction}

In a series of four articles (\cite{Pep74, Pep79, Pep80, Pep82}), 
Th\'eophile P\'epin announced the unsolvability of certain 
diophantine equations of the form $ax^4 + by^4 = z^2$. He did
not supply proofs for his claims; a few of his ``theorems'' 
were first proved in the author's article \cite{LPep} using 
techniques that P\'epin was not familiar with, such as the 
arithmetic of ideals\footnote{Ideals were introduced by Dedekind
in 1879, but took off only after Hilbert published his Zahlbericht
in 1897; its French translation \cite{HZB-f} started appearing in 1909.
Kummer had introduced ideal numbers already in the 1840s, but these
were used exclusively for studying higher reciprocity laws and
Fermat's Last Theorem. For investigating diophantine equations, the 
mathematicians of the late 19th century preferred Gauss's theory of 
quadratic forms (see Dirichlet \cite{Dir25,Dir28} and P\'epin \cite{Pep75})
to Dedekind's ideal theory.} in quadratic number fields. 

At the time P\'epin was studying these diophantine equations,
he was working on simplifying Gauss's theory of composition
of quadratic forms (see e.g. \cite{Pep75,PepC}), and it seems natural
to look into the theory of binary quadratic forms for approaches
to P\'epin's results. In fact we will find that all of P\'epin's 
claims (and a lot more) can be proved very naturally using 
quadratic forms.

We start by briefly recalling the relevant facts following 
Bhargava's exposition \cite{Bhar1} of Gauss's theory 
(Cox \cite{Cox} and Flath \cite{Flath} also provide excellent 
introductions) for the following reasons:
\begin{itemize}
\item It gives us an opportunity to point out some classical references on 
      composition of forms that deserve to be better known; this 
      includes work by Cayley, Riss, Speiser and others.
\item Most mathematicians nowadays are unfamiliar with the classical 
      language of binary quadratic forms, and in particular with 
      composition of forms.  
\item We need some results (such as Thm. \ref{T2} below) in a form
      that is slightly stronger than what can be found in the literature.
\item We have to fix the language anyway.
\end{itemize}
In addition, working with ray class groups instead of forms with
nonfundamental discriminants does not save space since we would have
to translate the results into the language of forms for comparing them
with P\'epin's statements.

Afterwards, we will supply the proofs P\'epin must have had in mind.
In the final section we will interpret our results in terms of Hasse's 
Local-Global Principle and the Tate-Shafarevich group of elliptic curves.

\section{Composition of Binary Quadratic Forms}

A binary quadratic form is a form $Q(x,y) = Ax^2 + Bxy + Cy^2$
in two variables with degree $2$ and coefficients $A, B, C \in \Z$;
in the following, we will use the notation $Q = (A,B,C)$.
The discriminant $\Delta = B^2 - 4AC$ of $Q$ will always be 
assumed to be a nonsquare. A form $(A,B,C)$ is called primitive
if $\gcd(A,B,C) = 1$. 

The group $\SL_2(\Z)$ of matrices $S = \smatr{r}{s}{t}{u}$ with 
$r, s, t, u \in \Z$ and determinant $\det S = ru-st = +1$ acts 
on the set of primitive forms with discriminant $\Delta$ via 
$Q|_S = Q(rx+sy,tx+uy)$; two forms $Q$ and $Q'$ are called equivalent
if there is an $S \in \SL_2(\Z)$ such that $Q' = Q|_S$. 
Given $Q = (A,B,C)$, the forms $Q' = Q|_S$ with $S = \smatr{1}{s}{0}{1}$
are said to be parallel to $Q$; their coefficients are
$Q' = (A,B+2As,C')$ with $C'=Q(s,1)$. Observe in particular that
we can always change $B$ modulo $2A$ (and compute the last coefficient
from the discriminant $\Delta$) without leaving the equivalence
class of the form. There are finitely many equivalence classes 
since each form is equivalent to one whose cefficients are bounded 
by $\sqrt{|\Delta|}$. 

A form $Q = (A,B,C)$ represents an integer $m$ primitively if
there exist coprime integers $a, b$ such that $m = Q(a,b)$. 
If $Q$ primitively represents $m$, then there is an $S \in \SL_2(\Z)$
such that $Q|_S = (A',B',C')$ with $A' = m$. In fact, write
$Q(r,t) = m$; since $\gcd(r,t) = 1$, there exist $s, u \in \Z$
with $ru-st = 1$; now set $S = \smatr{r}{s}{t}{u}$. This implies 
that forms representing $1$ are equivalent to the principal form 
$Q_0$ defined by
$$ Q_0(x,y) = \begin{cases}
         (1,0,m) & \text{ if } \Delta = -4m, \\
         (1,1,m) & \text{ if } \Delta = 1-4m
             \end{cases} $$
In fact, forms representing $1$ are equivalent to forms $(1,B,C)$, and
reducing $B$ modulo $2$ shows that they are equivalent to $Q_0$.

The set of $\SL_2(\Z)$-equivalence classes of primitive forms
(positive definite if $\Delta < 0$) can be given a group 
structure by introducing composition of forms, which can be most
easily explained using Bhargava's cubes\footnote{Historically, 
Bhargava's cubes occurred in the form of 
$2 \times 2 \times 2$-hypermatrices in the work of Cayley 
(see \cite{Cay45,CayHD}, or, for a modern account, 
\cite[Chap. 14, Prop. 1.4]{GKZ}), as pairs of bilinear forms
as in Eqn. (\ref{Ebf1}) and (\ref{Ebf2}) below (see 
Gauss \cite{Gauss} and Dedekind \cite{Dede}), as a trilinear
form (Dedekind \cite{Dede} and Weber \cite{WebC}), 
and as $2 \times 4$-matrices 
$\big(\begin{smallmatrix} a & b & c & d \\ 
                          e & f & g & h \end{smallmatrix}\big)$
(see Speiser \cite{Spei}, Riss \cite{Riss}, Shanks \cite{Sha3,Shac1,Shac2}, 
Towber \cite{Tow}, and most other presentations of composition).}. 
Each cube 
$$ \cubeA{a}{b}{c}{d}{e}{f}{g}{h}{\cA} $$
of eight integers $a, b, c, d, e, f, g, h$ can be sliced in three 
different ways (up-down, left-right, front-back):
\begin{align*}
UD & & M_1 = U & = \matr{a}{e}{b}{f}, & N_1 = D & = \matr{c}{g}{d}{h}, \\
LR & & M_2 = L & = \matr{a}{c}{e}{g}, & N_2 = R & = \matr{b}{d}{f}{h}, \\
FB & & M_3 = F & = \matr{a}{b}{c}{d}, & N_3 = B & = \matr{e}{f}{g}{h}.
\end{align*}
To each slicing we can associate a binary quadratic form 
$Q_i = Q_i^\cA$ by putting
$$ Q_i(x,y) = - \det (M_i x + N_i y). $$
Explicitly we find
\begin{align}
\label{EQ1} Q_1(x,y) & = (be-af)x^2 + (bg+de-ah-cf)xy + (dg-ch)y^2, \\
\label{EQ2} Q_2(x,y) & = (ce-ag)x^2 + (cf+de-ah-bg)xy + (df-bh)y^2, \\
\label{EQ3} Q_3(x,y) & = (bc-ad)x^2 + (bg+cf-ah-de)xy + (fg-eh)y^2.
\end{align}
These forms all have the same discriminant, and if two of them are
primitive (or positive definite), then so is the third.

On the set $\Cl^+(\Delta)$ of equivalence classes of primitive
forms with discriminant $\Delta$ we can introduce a group structure
by demanding that 
\begin{itemize}
\item The neutral element $[1]$ is the class of the principal form
      $Q_0(x,y)$. 
\item We have $([Q_1] \cdot [Q_2]) \cdot [Q_3] = [1]$ if and only if there
      exists a cube $\cA$ with $Q_i = Q_i^\cA$ for $i = 1, 2, 3$.
\end{itemize}
Most of the group axioms are easily checked: the cubes
$$  \cube{0}{1}{1}{0}{1}{0}{0}{\ m} \qquad   \text{ or } \qquad
    \cube{0}{1}{1}{1}{1}{1}{1}{\ \mu} $$ 
show that $[Q_0][Q_0][Q_0] = [1]$ in the two cases $\Delta = 4m$ 
and $\Delta = 4m+1 = 4\mu-3$, with $\mu = m+1$.

Next observe that $B \equiv \Delta \bmod 2$; thus we can put $B = 2b$ if
$\Delta = 4m$, and $B = 2b-1$ if $\Delta = 1+4m$. With 
$b' = 1-b$ we then find that the two cubes
$$ \cube{0}{1}{1}{0}{A}{\ -b}{b}{\ -C} \qquad \text{and} \qquad 
    \cube{0}{1}{1}{0}{A}{b'}{b}{\ -C} $$
give rise to the quadratic forms $Q_1 = Q_0$, $Q_2 = (A,B,C)$,
and $Q_3 = (A,-B,C)$. This shows that the inverse of $[Q]$ for 
$Q = (A,B,C)$ is the class of $Q^- = (A,-B,C)$. Note in particular
that, in general, the classes of $Q$ and $Q^-$ are different (in fact
they coincide if and only if their class has order dividing $2$), 
although both $Q$ and $Q^-$ represent exactly the same integers 
since $Q(x,y) = Q^-(x,-y)$. Gauss almost apologized for 
distinguishing the classes of these forms.

The verification of associativity is a little bit involved.
Perhaps the simplest approach uses Dirichlet's method of
united and concordant forms. Two primitive forms 
$Q_1 = (A_1,B_1,C_1)$ and $Q_2 = (A_2,B_2,C_2)$ are called 
concordant if $B_1 = B_2$, $C_1 = A_2C$ and $C_2 = A_1C$ for 
some integer $C$. The composition of $Q_1$ and $Q_2$ then is 
the form $(A_1A_2,B,C)$, as can be seen from the cube
$$ \cubeA{0}{A_1}{1}{0}{A_2}{B}{0}{\ \ -C}{\cA} $$ 
and the associated forms 
\begin{align*}
  Q_1 & = A_1x^2  + Bxy + A_2Cy^2, \\
  Q_2 & = A_2x^2  + Bxy + A_1Cy^2, \\
  Q_3 & = A_1A_2x^2 - Bxy + Cy^2.
\end{align*}
Given three forms, associativity follows immediately if we succeed in 
replacing the forms by equivalent forms with the same middle coefficients,
which is quite easy using the observation that forms represent
infinitely many integers coprime to any given number.

Composing two (classes of) forms requires solving\footnote{For an
excellent account of the composition formulas using Dedekind's 
approach via modules see Lenstra \cite{Len} and Schoof \cite{Schoof}. 
The clearest exposition of the composition algorithm of binary 
quadratic forms not based on modules is probably Speiser's 
\cite{Spei}; his techniques also allow to fill the gaps in Shanks' 
algorithm given in \cite{Shac2}. Shanks later gave a full version 
of his composition algorithm which he called NUCOMP.} systems of 
diophantine equations. All we need in this article 
is the following observation:

\begin{thm}
The $\SL_2(\Z)$-equivalence classes of primitive forms with discriminant 
$\Delta$ (positive definite forms if $\Delta < 0$) form a group with 
respect to composition. If $Q_1 = (A_1,B_1,C_1)$ and $Q_2 = (A_2,B_2,C_2)$ 
are primitive forms with discriminant $\Delta$, and if 
$e = \gcd(A_1,A_2,\frac12(B_1+B_2))$, then we can always find a form 
$Q_3 = (A_3,B_3,C_3)$ in the class $[Q_1][Q_2]$ with $A_3 = A_1A_2/e^2$. 
\end{thm}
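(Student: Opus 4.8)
The plan is to reduce the general case to Dirichlet's concordant forms by finding, within the classes $[Q_1]$ and $[Q_2]$, representative forms whose first coefficients multiply to $A_1A_2/e^2$ and which are concordant in the sense defined above. The strategy proceeds in three stages: first choose good representatives of each class, then verify concordance, then read off the product form from a Bhargava cube.

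First I would use the fact that a primitive form represents infinitely many integers coprime to any given modulus. Applying this to $Q_1$ with modulus $A_2 \Delta$, I can replace $Q_1$ by an equivalent form $(A_1', B_1', C_1')$ with $\gcd(A_1', A_2) = 1$; this is the standard "separation" step, but to get the factor of $e$ I must be more careful. The correct approach is to argue directly with $e = \gcd(A_1, A_2, \tfrac12(B_1+B_2))$: I replace $Q_1$ and $Q_2$ by parallel forms (changing $B_i$ modulo $2A_i$) so that $B_1 = B_2 = B$; this is legitimate because $\gcd(A_1, A_2, \tfrac12(B_1+B_2))$ controls exactly the obstruction to making the middle coefficients agree once one accounts for the congruences $B_i \equiv \Delta \bmod 2$ and $B_i^2 \equiv \Delta \bmod 4A_i$. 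Then I would check that after this normalization, $e = \gcd(A_1, A_2, B)$, and that $(A_1/e)$, $(A_2/e)$, and suitable cofactors of $C$ can be arranged so that $C_1 = (A_2/e)\cdot C'$ and $C_2 = (A_1/e) \cdot C'$ for a common $C'$ — i.e., the rescaled forms $(A_1/e, B, eC_1/A_2)$ and so on become concordant after dividing out $e$. I expect this normalization bookkeeping to be the main obstacle: getting the powers of $e$ to land correctly requires tracking how the discriminant relation $B^2 - 4A_iC_i = \Delta$ interacts with the gcd, and one has to be slightly careful that the needed simultaneous congruence $B \equiv B_1 \bmod 2A_1$, $B \equiv B_2 \bmod 2A_2$ is solvable, which is exactly where the hypothesis on $e$ enters via the Chinese Remainder Theorem.

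Once concordance is arranged, the rest is immediate from the material already developed. The cube
$$ \cubeA{0}{A_1/e}{1}{0}{A_2/e}{B}{0}{\ \ -C'}{\cA} $$
together with the three associated forms displayed in the discussion of Dirichlet's method shows that $[Q_1][Q_2]$ contains the form $Q_3$ with leading coefficient $(A_1/e)(A_2/e)\cdot e^2/e^2$; more precisely, after undoing the rescaling one finds $A_3 = A_1 A_2 / e^2$ as claimed, with $B_3 = -B$ (or $B$, after replacing $Q_3$ by an equivalent parallel form) and $C_3$ determined by the discriminant. Finally, the group axioms — existence of the identity $[Q_0]$, inverses $[Q^-]$, and associativity — are exactly the facts recalled before the theorem statement via the neutral-element cubes, the inverse cubes, and Dirichlet's united-forms argument for associativity; so I would simply cite those and conclude.
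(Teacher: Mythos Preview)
First, a framing remark: the paper does not actually supply a self-contained proof of this theorem. The group axioms are handled by the cube constructions preceding the statement, and the formula $A_3 = A_1A_2/e^2$ is attributed to the standard composition algorithm, with references to Speiser, Shanks, Lenstra, and Schoof in the footnote. So there is no ``paper's proof'' to match; the question is whether your sketch would succeed.

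There is a genuine gap in your reduction. You claim that by parallel shifts (changing $B_i$ modulo $2A_i$) you can arrange $B_1 = B_2 = B$, and that $e = \gcd(A_1,A_2,\tfrac12(B_1+B_2))$ is ``exactly the obstruction'' to doing so. This is not correct. Solving $B \equiv B_1 \bmod 2A_1$ and $B \equiv B_2 \bmod 2A_2$ simultaneously requires $\gcd(A_1,A_2) \mid \tfrac12(B_1-B_2)$, which is a condition on the \emph{difference}, not on $e$. For a concrete failure, take $Q_1 = (2,1,3)$ and $Q_2 = (2,-1,3)$ with $\Delta = -23$: here $\gcd(A_1,A_2) = 2$ but $\tfrac12(B_1-B_2) = 1$, so no common $B$ exists, while $e = \gcd(2,2,0) = 2$ and indeed $A_3 = 1$ since the two forms are inverses.

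The ``rescaling'' you propose to absorb $e$ does not repair this. Your cube with front face $(0,A_1/e,1,0)$ and back face $(A_2/e,B,0,-C')$ yields associated forms with \emph{first coefficients} $A_1/e$ and $A_2/e$, not $A_1$ and $A_2$. A form $(A_1/e,B,\ast)$ is in general not $\SL_2(\Z)$-equivalent to $(A_1,B_1,C_1)$, so the cube does not witness the composition of the original classes. (And expressions like $(A_1/e, B, eC_1/A_2)$ need not even have integer entries.)

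What the composition algorithm actually does when $e>1$ is different: one keeps $A_3 = A_1A_2/e^2$ and solves a system of \emph{three} linear congruences for $B_3$ modulo $2A_3$, namely $B_3 \equiv B_1 \bmod 2A_1/e$, $B_3 \equiv B_2 \bmod 2A_2/e$, and $B_3 \cdot \tfrac12(B_1+B_2) \equiv \tfrac12(\Delta + B_1B_2) \bmod 2A_3$ (or an equivalent third condition). The solvability of this system is what the definition of $e$ guarantees, and this is the content the paper outsources to the cited references. Your concordant-form cube handles only the united case $e=1$; for the general statement you would need to carry out the congruence argument above rather than attempt a rescaling.
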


The group of $\SL_2(\Z)$ equivalence classes of primitive forms 
with discriminant $\Delta$ is called the class group in the strict
sense and is denoted by $\Cl^+(\Delta)$ (the equivalence classes 
with respect to a suitably defined action by $\GL_2(\Z)$ gives
rise to the class group $\Cl(\Delta)$ in the wide sense; for
negative discriminants, both notions coincide). It is isomorphic 
to the ideal class group in the strict sense of the order with 
discriminant $\Delta$ inside the quadratic number field 
$\Q(\sqrt{\Delta}\,)$.

The connection between Bhargava's group law and Gauss composition 
is provided by the following

\begin{thm}
Let $\cA = [a,b,c,d,e,f,g,h]$ be a cube to which three primitive 
forms $Q_i = Q_i^\cA$ are attached. Then 
\begin{equation}\label{EGcom}
   Q_1(x_1,y_1) Q_2(x_2,y_2) = Q_3(x_3,-y_3), 
\end{equation}
where $x_3$ and $y_3$ are bilinear forms (linear forms in $x_1, y_1$ 
and $x_2, y_2$, respectively) and are given by
\begin{eqnarray}
\label{Ebf1}   x_3 & = ex_1x_2 + fx_1y_2 + gx_2y_1 + hy_1y_2, \\
\label{Ebf2}   y_3 & = ax_1x_2 + bx_1y_2 + cx_2y_1 + dy_1y_2.
\end{eqnarray}
\end{thm}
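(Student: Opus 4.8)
The plan is to turn everything into $2\times2$ determinants and reduce the claim to a single polynomial identity in the eight entries of $\cA$; primitivity plays no role, since (\ref{EGcom}) is an identity of polynomials in $a,\dots,h,x_1,y_1,x_2,y_2$. Since $Q_i(x,y)=-\det(M_ix+N_iy)$, we have $Q_3(x_3,-y_3)=-\det(M_3x_3-N_3y_3)$ and $Q_1(x_1,y_1)Q_2(x_2,y_2)=\det(M_1x_1+N_1y_1)\cdot\det(M_2x_2+N_2y_2)$, so (\ref{EGcom}) is equivalent to
\begin{equation*}
  \det(M_3x_3-N_3y_3)=-\det(M_1x_1+N_1y_1)\cdot\det(M_2x_2+N_2y_2).
\end{equation*}
It is also useful to record where $x_3,y_3$ come from: with $P_i=M_ix_i+N_iy_i$, a one-line computation gives $P_2\,(x_1,y_1)^\top=(y_3,x_3)^\top$ and, dually, $(x_2,y_2)\,P_1=(y_3,x_3)$, so $x_3$ and $y_3$ are exactly the entries of these matrix--vector products.

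First I would substitute (\ref{Ebf1}) and (\ref{Ebf2}) into $\widetilde P:=M_3x_3-N_3y_3$ and simplify its four entries using the coefficient formulas (\ref{EQ1})--(\ref{EQ3}). The outcome is clean: writing $Q_i=(A_i,B_i,C_i)$ and setting $\lambda:=bg-cf$ and $\mu:=ah-de$, one finds
\begin{equation*}
  \widetilde P=\begin{pmatrix}-A_1x_1y_2-A_2x_2y_1+\mu y_1y_2 & A_1x_1x_2+\lambda x_2y_1-C_2y_1y_2\\[1mm]
  A_2x_1x_2-\lambda x_1y_2-C_1y_1y_2 & -\mu x_1x_2+C_2x_1y_2+C_1x_2y_1\end{pmatrix}.
\end{equation*}
The middle coefficients in (\ref{EQ1}) and (\ref{EQ2}) give the two relations $2\lambda=B_1-B_2$ and $2\mu=-(B_1+B_2)$, i.e.\ $\lambda-\mu=B_1$ and $\lambda+\mu=-B_2$. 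Expanding $\det\widetilde P=\widetilde P_{11}\widetilde P_{22}-\widetilde P_{12}\widetilde P_{21}$ and sorting by the nine monomials in $x_1,y_1,x_2,y_2$, each coefficient matches that of $-Q_1Q_2$ once these two relations are inserted (for instance the coefficient of $x_1x_2y_1y_2$ comes out as $\lambda^2-\mu^2=(\lambda-\mu)(\lambda+\mu)=-B_1B_2$), so $\det\widetilde P=-Q_1Q_2$ and (\ref{EGcom}) follows.

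A more structural alternative would exploit that (\ref{EGcom}) is covariant for the action of $\SL_2(\Z)^3$ on cubes---applying $\gamma$ replaces $Q_i^\cA$ by $Q_i^{\gamma\cA}$ through $\SL_2$-substitutions (together with the interchange of two parallel slicings), with $x_3,y_3$ transforming accordingly---so that one may assume $\cA$ is normalized. Choosing representatives of $[Q_1],[Q_2]$ with coprime leading coefficients and putting them in Dirichlet concordant form reduces everything to the concordant cube of \S1, for which $x_3,y_3$ are short bilinear forms and (\ref{EGcom}) is precisely Dirichlet's classical composition formula for united forms (equivalently, multiplicativity of the norm transported through the dictionary between cubes and modules). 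I would nevertheless carry out the first route, as it is self-contained given (\ref{EQ1})--(\ref{EQ3}) and (\ref{Ebf1})--(\ref{Ebf2}) and the only real work there is clerical: keeping the signs straight in the four entries of $\widetilde P$ and in the determinant expansion. The genuine obstacle in the second route is the fact---itself a piece of composition theory rather than a formality---that every cube with primitive attached forms is $\SL_2(\Z)^3$-equivalent to a concordant one.
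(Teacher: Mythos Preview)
Your argument is correct: the entries of $\widetilde P=M_3x_3-N_3y_3$ are exactly as you wrote them, the relations $\lambda-\mu=B_1$, $\lambda+\mu=-B_2$ hold, and the nine coefficients of $\det\widetilde P$ match those of $-Q_1Q_2$ term by term. The side remarks $P_2(x_1,y_1)^\top=(y_3,x_3)^\top$ and $(x_2,y_2)P_1=(y_3,x_3)$ are also correct and give a nice conceptual origin for the bilinear forms.

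As for comparison with the paper: the paper does not actually prove this theorem. It simply states that the identity ``can be verified e.g.\ by a computer algebra system'' and refers to Dedekind and Speiser for a conceptual argument. Your first route is therefore exactly what the paper proposes, only carried out by hand; the organizing device of expressing the four entries of $\widetilde P$ in terms of $A_i,B_i,C_i,\lambda,\mu$ (rather than the raw cube entries $a,\ldots,h$) is what makes the hand computation feasible and is a genuine addition over the paper's one-line dismissal. Your second, $\SL_2(\Z)^3$-equivariance route is in the spirit of the conceptual proofs the paper cites, and your caveat about it---that reducing an arbitrary cube to a concordant one is itself a nontrivial piece of composition theory---is exactly right.
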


This can be verified e.g. by a computer algebra system; for a 
conceptual proof, see Dedekind \cite{Dede} or Speiser \cite{Spei}. 
The somewhat unnatural minus sign on the right hand side of 
(\ref{EGcom}) comes from breaking the symmetry between the forms 
$Q_i$; Dedekind \cite{Dede} and Weber \cite{WebC} have shown that 
$$ Q_1(x_1,y_1) Q_2(x_2,y_2) Q_3(x_3,y_3) = Q_0(x_4,y_4) $$
for certain trilinear forms $x_4, y_4$; this formula preserves
the symmetry of the forms involved and makes the group law
appear completely natural.

Gauss defined a form $Q_3$ to be a composite of the forms $Q_1$
and $Q_2$ if the identity (\ref{EGcom}) holds and if (and
this additional condition is crucial -- it is what allowed
Gauss to make form classes into a group\footnote{Composition of 
binary quadratic forms can be generalized to arbitrary rings if 
one is willing to replace quadratic forms by quadratic spaces; 
see Kneser \cite{KneC} and Koecher \cite{Koe87}. Gauss's proof 
that composition gives a group structure extends without problems to 
principal ideal domains with characteristic $\ne 2$, and even
to slightly more general rings (see e.g. Towber \cite{Tow}).}) 
the formulas (\ref{EQ1}) and (\ref{EQ2}) hold.

\begin{cor}\label{C1}
Let $\Delta$ be a discriminant, $r$ an integer, and $p$ a prime not 
dividing $\Delta$. Assume that $p$ is primitively represented by a 
form $Q_1$, and that $pr$ is represented primitively by $Q_2$. Then 
we can choose $g \in \{\pm 1\}$ in such a way that $p^2r$ is represented 
primitively by any form $Q_3$ with $[Q_1][Q_2]^g = [Q_3]$.
\end{cor}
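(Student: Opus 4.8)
\emph{Proof proposal.} The plan is to derive everything from the group law on $\Cl^+(\Delta)$ in the explicit form given by Theorem 1, together with two elementary remarks. First, since a matrix in $\SL_2(\Z)$ and its inverse both carry primitive vectors to primitive vectors, whether a form primitively represents a given integer $m$ depends only on its class in $\Cl^+(\Delta)$. Second, since $Q^-(x,y)=Q(x,-y)$ and $[Q^-]=[Q]^{-1}$, this property is moreover unchanged under $[Q]\mapsto[Q]^{-1}$. Hence it is enough to exhibit, for a suitable $g\in\{\pm1\}$, one form of leading coefficient $p^2r$ lying in the class $[Q_1][Q_2]^{g}$ or in its inverse.

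I would start by normalising. Since $Q_1$ primitively represents $p$ it is equivalent to a form $(p,B_1,C_1)$, and since $Q_2$ primitively represents $pr$ it is equivalent to a form $(pr,B_2,C_2)$; replace $Q_1$ and $Q_2$ by these. Because $p\nmid\Delta$ and $B_1^2\equiv\Delta\equiv B_2^2\pmod p$, we have $p\nmid B_1$, $p\nmid B_2$ and $B_2\equiv\pm B_1\pmod p$. The forms $(p,B_1,C_1)$ and $(p,-B_1,C_1)=Q_1^-$ have classes $[Q_1]$ and $[Q_1]^{-1}$; choose $\beta\in\{B_1,-B_1\}$ with $\beta\equiv B_2\pmod p$ (with $p$ replaced by $4$ when $p=2$, which is possible since $B_1,B_2$ are then odd, so $B_1$ and $-B_1$ realise both residue classes of odd integers mod $4$), and put $Q_1'=(p,\beta,C_1)$, so that $[Q_1']=[Q_1]^{\varepsilon}$ for a sign $\varepsilon\in\{\pm1\}$.

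Now apply Theorem 1 to $Q_1'=(p,\beta,C_1)$ and $Q_2=(pr,B_2,C_2)$. The relevant quantity is $e=\gcd\big(p,\,pr,\,\tfrac12(\beta+B_2)\big)=\gcd\big(p,\,\tfrac12(\beta+B_2)\big)$, and our choice of $\beta$ makes $\tfrac12(\beta+B_2)$ prime to $p$ (for $p$ odd this residue is $\equiv B_2\not\equiv0\pmod p$; for $p=2$ it is odd), so $e=1$. Theorem 1 then produces a form $Q_3=(p^2r,*,*)$ in the class $[Q_1'][Q_2]=[Q_1]^{\varepsilon}[Q_2]$, and $Q_3$ represents $p^2r$ primitively via $(x,y)=(1,0)$. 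By the two remarks above, every form whose class is $[Q_1]^{\varepsilon}[Q_2]$ or $([Q_1]^{\varepsilon}[Q_2])^{-1}$ primitively represents $p^2r$. When $\varepsilon=1$ the first of these classes equals $[Q_1][Q_2]$; when $\varepsilon=-1$ the second equals $[Q_1][Q_2]^{-1}$. In both cases the class $[Q_1][Q_2]^{g}$ with $g=\varepsilon$ primitively represents $p^2r$, which is the assertion.

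The one step demanding genuine care is the verification that $e=1$, i.e. the correct choice of the sign $\beta=\pm B_1$; this is exactly where the hypothesis $p\nmid\Delta$ enters (through $p\nmid B_1,B_2$), and where the prime $p=2$ needs a separate glance (working mod $4$ and using that odd integers occupy both residues there). Everything else is bookkeeping with the abelian group law and the two invariance remarks.
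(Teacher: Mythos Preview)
Your proof is correct and follows essentially the same route as the paper's: normalise to $Q_1=(p,B_1,C_1)$, $Q_2=(pr,B_2,C_2)$, then use Theorem~1 together with a sign flip to force $e=\gcd(p,\tfrac12(B_1\pm B_2))=1$, the hypothesis $p\nmid\Delta$ ruling out the bad case. The only cosmetic differences are that the paper flips the sign on $B_2$ (replacing $Q_2$ by $Q_2^-$) rather than on $B_1$, and that the paper's contradiction argument (``$p\mid\tfrac12(B_1+B_2)$ and $p\mid\tfrac12(B_1-B_2)$ force $p\mid B_1\mid\Delta$'') handles $p=2$ uniformly, whereas you treat $p=2$ by a separate mod~$4$ check.
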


It is obvious from the Gaussian composition formula (\ref{EGcom})
that $ap^2$ is represented by $Q_3$ and $Q_3'$; what we have to 
prove is that there exists a {\em primitive} representation. 

As an example illustrating the problem, take $Q_1 = (2,1,3)$
and $Q_2 = (2,-1,3)$. Both forms represent $p = 3$ primitively:
we have $3 = Q_1(0,1) = Q_2(0,1)$. We also have
$[Q_1][Q_2] = [Q_0]$ and $[Q_1][Q_2]^{-1} = [Q_2]$.
Both $Q_0$ and $Q_2$ represent $9$, but $Q_2(2,1) = 9$ is a 
primitive representation whereas $Q_0(3,0) = 9$ is not.

\begin{proof}[Proof of Cor. \ref{C1}]
We may assume without loss of generality that $Q_1 = (p,B_1,C_1)$ 
and $Q_2 = (pr,B_2,C_2)$. The composition algorithm shows that 
$[Q_1][Q_2] = [Q_3]$ for some form $Q_3 = (A_3,B_3,C_3)$ with 
$A_3 = p^2r/e^2$, where $e = \gcd(p,\frac12(B_1+B_2))$. If 
$p \nmid \frac12(B_1+B_2)$, then $Q_3(1,0) = p^2r$ and we are done.
If $p \mid \frac12(B_1+B_2)$, replace $Q_2$ by $Q_2^- = (pr,-B_2,C_2)$;
in this case, we find $[Q_1][Q_2^-] = [Q_3']$ for 
$Q_3' = (A_3,B_3,C_3)$ with $A_3 = p^2r/e^2$, where 
$e = \gcd(p,\frac12(B_1-B_2))$. If $p \nmid \frac12(B_1-B_2)$ 
we are done; the only remaining problematic case is where
$p$ divides both $\frac12(B_1+B_2)$ and $\frac12(B_1-B_2)$, which
implies that $p \mid B_1$ and $p \mid B_2$. But then 
$p \mid (B_1^2 - 4pC_1) = \Delta$, contradicting our assumption.
\end{proof}

\section{Genus Theory}

Gauss's genus theory characterizes the square classes in $\Cl^+(\Delta)$.
Two classes $[Q_1]$ and $[Q_2]$ are said to be in the same genus if 
there is a class $[Q]$ such that $[Q_1] = [Q_2][Q]^2$. The principal 
genus is the genus containing the principal class $[Q_0]$; by 
definition the principal genus consists of all square classes.

\subsection*{Extracting Square Roots in the Class Group}

Recall that a discriminant $\Delta$ is called fundamental if it is
the discriminant of a quadratic number field. Arbitrary discriminants
can always be written in the form $\Delta = \Delta_0 f^2$, where
$\Delta_0$ is fundamental, and where $f$ is an integer called the
conductor of the ring $\Z \oplus \frac{\Delta + \sqrt{\Delta}}2 \Z$. 
An elementary technique for detecting squares in the class group 
$\Cl(\Delta)$ is provided by the following

\begin{thm}\label{T2}
Let $\Delta_0$ be a fundamental discriminant, and assume that $Q$ is 
a primitive form with discriminant $\Delta = \Delta_0 f^2$. Then
the following conditions are equivalent:
\begin{enumerate}
\item[$i)$] $Q$ represents a square $m^2$ coprime to $f$.
\item[$i')$] $Q$ represents a square $m^2$ coprime to $\Delta$.
\item[$ii)$] There exists a  primitive form $Q_1$ with $[Q] = [Q_1]^2$.
\item[$iii)$] There exist rational numbers $x, y$ with denominator
      coprime to $f$ such that $Q(x,y) = 1$.
\end{enumerate}
Moreover, if $Q$ represents $m^2$ primitively, then $Q_1$ can be 
chosen in such a way that it represents $m$ primitively. 
\end{thm}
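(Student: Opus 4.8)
The plan is to prove the four conditions mutually equivalent through a short cycle of implications, after first isolating the one genuinely arithmetic input as a lemma: \emph{if a primitive form $Q$ of discriminant $\Delta = \Delta_0 f^2$ (with $\Delta_0$ fundamental) represents a square $n^2$ primitively and $\gcd(n,f)=1$, then in fact $\gcd(n,\Delta)=1$.} Since $Q$ primitively represents $n^2$ it is equivalent to a form $(n^2,B,C)$ with $B^2 - 4n^2 C = \Delta$, so $\Delta \equiv B^2 \bmod p^2$ for every prime $p \mid n$. If an odd prime $p$ divides both $n$ and $\Delta$, then $p \nmid f$ (as $\gcd(n,f)=1$) and $p$ divides the fundamental discriminant $\Delta_0$ to the first power only, so $p \parallel \Delta$; but $\Delta \equiv B^2 \bmod p^2$ forces $p \mid B$, whence $p^2$ divides both $B^2$ and $4n^2 C$, so $p^2 \mid \Delta$ --- a contradiction. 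For $p = 2$ with $2 \mid n$, $f$ is odd and $16 \mid 4n^2 C$, so $B^2 \equiv \Delta \bmod 16$; but a fundamental $\Delta_0$ makes $\Delta$ either odd (so $2 \nmid \Delta$, and there is nothing to check) or $\equiv 8, 12 \bmod 16$, which is not a square mod $16$ --- so $2 \mid n$ cannot occur. Hence no prime divides both $n$ and $\Delta$.

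Granting the lemma, $(i')\Rightarrow(i)$ is trivial since $f \mid \Delta$. For $(i)\Rightarrow(i')$ one first passes to a primitive representation: if $Q(x_0,y_0)=m^2$ and $d=\gcd(x_0,y_0)$, then $Q(x_0/d,y_0/d)=(m/d)^2$ is a primitive representation of a square still coprime to $f$, so by the lemma it is coprime to $\Delta$. The equivalence $(i)\Leftrightarrow(iii)$ is just clearing denominators: $Q(a,b)=m^2$ with $\gcd(m,f)=1$ gives $Q(a/m,b/m)=1$ with denominators coprime to $f$, and conversely a rational solution of $Q=1$ has a common denominator $c$ coprime to $f$ with $Q(cx,cy)=c^2$. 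Finally $(ii)\Rightarrow(i)$ is immediate from the Gauss composition identity (\ref{EGcom}): if $[Q]=[Q_1]^2$, pick a value $n=Q_1(x_0,y_0)$ coprime to $f$ (a form represents infinitely many integers coprime to any fixed number) and specialize (\ref{EGcom}) at $(x_1,y_1)=(x_2,y_2)=(x_0,y_0)$ to see that the form $Q_3\in[Q_1]^2=[Q]$ represents $n^2$, hence so does $Q$.

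It remains to prove $(i')\Rightarrow(ii)$ together with the last assertion. By the reduction above we may assume $Q$ represents $m^2$ primitively with $\gcd(m,\Delta)=1$ --- in the ``moreover'' situation we are handed a primitive representation of the square $m^2$ with $\gcd(m,f)=1$, and the lemma upgrades this to $\gcd(m,\Delta)=1$. Then $Q$ is equivalent to a form $(m^2,B,C)$ with $B^2 - 4m^2 C = \Delta$ and $\gcd(m,B)=1$, since a common prime factor of $m$ and $B$ would divide $\Delta$. Put $Q_1=(m,B,mC)$: it has discriminant $B^2 - 4m^2 C = \Delta$, it is primitive because $\gcd(m,B,mC)=\gcd(m,B)=1$, it is positive definite when $\Delta<0$, and $Q_1(1,0)=m$ is a primitive representation. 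Moreover $Q_1$ is concordant with itself in Dirichlet's sense (equal middle coefficients $B$, with $mC = m \cdot C$), so $[Q_1]^2$ is the class of the composite $(m \cdot m, B, C) = (m^2, B, C)$, which is the form we chose in the class of $Q$; thus $[Q_1]^2=[Q]$, and $Q_1$ represents $m$ primitively, closing the cycle.

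The main obstacle is the lemma, and inside it the $2$-adic bookkeeping: this is the only step where the fundamentality of $\Delta_0$ is used essentially --- for a discriminant with a repeated prime factor the implication $(i)\Rightarrow(i')$, and with it the whole theorem, would fail --- so the argument has to distinguish genuinely between $\Delta_0 \equiv 1 \bmod 4$ and $\Delta_0 = 4m_0$. Everything else is formal: clearing denominators, the identity (\ref{EGcom}), and Dirichlet's composition of concordant forms.
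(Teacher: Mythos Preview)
Your proof is correct and follows essentially the same route as the paper: the same cycle of implications, the same use of the Gauss identity (\ref{EGcom}) for $(ii)\Rightarrow(i)$, and the same Dirichlet composition $Q_1=(m,B,mC)$ with $[Q_1]^2=[(m^2,B,C)]$ for $(i')\Rightarrow(ii)$. The one place where you diverge slightly is the $2$-adic step in the lemma: the paper argues that if $2\mid n$ and $2\mid\Delta$ then $(a^2,b,C)$ with $a=n/2$, $b=B/2$ has discriminant $\Delta/4$, which forces $\Delta/4\equiv 0,1\bmod 4$ and hence $2\mid f$; you instead compute $\Delta\bmod 16$ directly. Both are fine. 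One small wording quibble: your concluding ``so $2\mid n$ cannot occur'' overstates things --- what you have actually shown (and what you need) is that $2$ cannot divide both $n$ and $\Delta$; when $\Delta$ is odd, $2\mid n$ is perfectly possible but irrelevant, as you yourself note.
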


\begin{proof}
Observe that Gauss's equation (\ref{EGcom}) implies that if
$Q_1$ represents $m$ and $Q_2$ represents $n$, then 
$Q_3$ represents the product $mn$. Together with the fact
that the primitive form $Q_1$ represents integers coprime to 
$\Delta$ this shows that ii) implies i).

Let us next show that i) and i') are equivalent. It is cleary
sufficient to show that i) implies i'). Assume therefore that 
$Q(x,y) = A^2$ for coprime integers $x, y$, and that there is 
a prime $p \mid \gcd(A,\Delta)$. We claim that $p \mid f$.
We know that $Q$ is equivalent to some form $(A^2,B,C)$, so we 
may assume that $Q = (A^2,B,C)$.

If $p$ is odd, then $p \mid \Delta = B^2 - 4A^2C$, hence $p \mid B$,
$p^2 \mid \Delta$, and finally $p^2 \mid f$ since fundamental
discriminants are not divisible by squares of odd primes.

If $p = 2$, then $B = 2b$ and $A = 2a$, and $(a^2,b,C)$ is a 
form with discriminant $\Delta/4$, showing that $2 \mid f$.
Thus i) and i') are equivalent.

For showing that i') $\Lra$ ii), assume that $Q$ represents
$m^2$ primitively (cancelling squares shows that $Q$ primitively
represents a square), and write $Q = (m^2,B,C)$; Dirichlet 
composition then shows that $[Q] = [Q_1]^2$ for $Q_1 = (m,B,mC)$;
note that if $\Delta < 0$, the form $Q_1$ is positive definite only
for $m > 0$. Since $\gcd(m,\Delta) = \gcd(m,B^2) = 1$, the form 
$Q_1$ is primitive. 

Finally, i) and iii) are trivially equivalent.
\end{proof}

We will also need

\begin{cor}\label{Cgen}
Let $Q_1$ and $Q_2$ be forms with discriminant $\Delta = \Delta_0 f^2$. 
If $Q_j(r_j,s_j) = ax_j^2$ for integers $r_j, s_j, x_j$ ($j = 1, 2$)
with $\gcd(x_1x_2,f) = 1$, then $Q_1$ and $Q_2$ belong to the same genus.
\end{cor}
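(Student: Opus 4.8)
The plan is to reduce the statement to Theorem~\ref{T2}. By definition, $Q_1$ and $Q_2$ lie in the same genus exactly when $[Q_1][Q_2]^{-1}$ is a square in $\Cl^+(\Delta)$, so it suffices to exhibit inside this class a primitive form that represents a perfect square coprime to $f$; the equivalence of conditions i) and ii) in Theorem~\ref{T2} then does the rest.

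Such a form comes directly from Gauss's composition identity (\ref{EGcom}). Since $Q_2^-(x,y)=Q_2(x,-y)$ represents the same integers as $Q_2$ while $[Q_2^-]=[Q_2]^{-1}$, the form $Q_2^-$ represents $a x_2^2$ (at the point $(r_2,-s_2)$). Choose a cube whose three attached forms are $Q_1$, $Q_2^-$ and a third form $Q_3$; then $[Q_1][Q_2^-][Q_3]=[1]$, hence $[Q_3]=[Q_1]^{-1}[Q_2]$, and substituting the points $(r_1,s_1)$ and $(r_2,-s_2)$ into (\ref{EGcom}) shows that $Q_3$ represents $(a x_1^2)(a x_2^2)=(a x_1 x_2)^2$. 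Thus the class $[Q_1]^{-1}[Q_2]$ (equivalently $[Q_1][Q_2]^{-1}$) contains a form representing the perfect square $(a x_1 x_2)^2$.

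It remains to see that this square is coprime to $f$, i.e.\ that $\gcd(a,f)=1$; we already know $\gcd(x_1x_2,f)=1$, and $a$ is understood to be squarefree, as the notation $a x_j^2$ indicates. First reduce to \emph{primitive} representations: if $d=\gcd(r_j,s_j)$ then $d^2\mid a x_j^2$, which for squarefree $a$ forces $d\mid x_j$, so dividing $r_j,s_j,x_j$ through by $d$ leaves $a$ unchanged and keeps the square part coprime to $f$. Now if some prime $p$ divided both $a$ and $f$, then $p^2\mid\Delta$, whereas $Q_j$ primitively represents $a x_j^2$, whose $p$-adic valuation is exactly $1$ (as $a$ is squarefree and $p\nmid x_j$). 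Writing $Q_j$ equivalently as $(n,B,C)$ with $v_p(n)=1$ and using $p^2\mid\Delta=B^2-4nC$ forces $p\mid B$, and then $p\mid C$, contradicting primitivity; for $p=2$ the same argument works, now invoking the congruence conditions satisfied by the fundamental discriminant $\Delta_0$, exactly as in the proof of the equivalence of i) and i') in Theorem~\ref{T2}. Hence $\gcd(a,f)=1$, so $(a x_1 x_2)^2$ is coprime to $f$, Theorem~\ref{T2} applies, and $[Q_1]^{-1}[Q_2]=[Q]^2$ for some primitive $Q$; therefore $[Q_2]=[Q_1][Q]^2$ and $Q_1,Q_2$ are in the same genus.

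The one part that requires genuine work is the coprimality statement $\gcd(a,f)=1$: the reduction to primitive representations and the check that a prime dividing $f$ cannot occur to the first power in a value primitively represented by $Q_j$, in particular when that prime is $2$. Everything else is just Gauss composition combined with a direct appeal to Theorem~\ref{T2}.
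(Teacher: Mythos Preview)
Your proof is correct and follows the same route as the paper's: show that a form in the class $[Q_1][Q_2]^{\pm 1}$ represents the square $(ax_1x_2)^2$ via Gauss composition, then invoke Theorem~\ref{T2}. You are in fact more careful than the paper's two-line argument, which simply asserts that this square is ``coprime to $f$'' without further comment; your verification that $\gcd(a,f)=1$ (under the natural reading that $a$ is squarefree) fills in exactly the point the paper glosses over, and your odd-prime case is clean---only the $p=2$ case needs a little more than ``exactly as in Theorem~\ref{T2}'', but it does go through.
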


\begin{proof}
Any form in the class $[Q_1][Q_2]$ represents a square 
coprime to $f$, hence $[Q_1][Q_2] = [Q]^2$. This implies 
that $[Q_1]$ and $[Q_2]$ are in the same genus.
\end{proof}

\subsection*{Nonfundamental Discriminants}

For negative discriminants, Gauss proved a relation between the 
class numbers $h(\Delta)$ and $h(\Delta f^2)$. For general
discriminants, a similar formula was derived by Dirichlet from 
his class number formula, and Lipschitz later gave an arithmetic
proof of the general result. Since we only consider positive
definite forms, we are content with stating a special case of 
Gauss's result:

\begin{thm}
Let $p$ be a prime, and $\Delta < -4$ a discriminant. Then
\begin{equation}\label{ELip}
  h(\Delta p^2) = \Big(p - \Big(\frac{\Delta}p\Big)\Big) \cdot h(\Delta). 
\end{equation}
\end{thm}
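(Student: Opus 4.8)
The plan is to prove the class number relation \eqref{ELip} by setting up an exact sequence that compares the strict class group $\Cl^+(\Delta p^2)$ of the order of conductor $p$ with the strict class group $\Cl^+(\Delta)$ of the maximal order (note that for $\Delta<0$ the strict and wide class numbers agree, so we may freely write $h(\Delta)=|\Cl^+(\Delta)|$). The natural map here sends a primitive form of discriminant $\Delta p^2$, or rather the ideal it corresponds to, to the $\Q(\sqrt\Delta)$-ideal it generates; concretely, on forms one can use the construction in Theorem~1: a primitive form $(A,B,C)$ with $p\nmid A$ and discriminant $\Delta p^2$ is $\SL_2(\Z)$-equivalent to one representing an integer coprime to $p$, and then $(A, B/p, \dots)$-type manipulations (dividing the middle coefficient appropriately once one has arranged $p\mid B$) produce a form of discriminant $\Delta$. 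The point is that this gives a surjective homomorphism $\pi\colon\Cl^+(\Delta p^2)\to\Cl^+(\Delta)$.

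First I would establish surjectivity: every primitive form of discriminant $\Delta$ represents infinitely many integers coprime to $p$, and given such a representation $(a,b,c)$ with $p\nmid a$ one checks $(a, bp, \dots)$ — adjusting the last coefficient to make the discriminant $\Delta p^2$ — is a primitive form mapping to $[(a,b,c)]$. Next I would compute the kernel. An element of the kernel is represented by a form of discriminant $\Delta p^2$ that becomes principal in the maximal order; translating through the ideal dictionary, the kernel is the image of $(\mathcal O/p\mathcal O)^\times$ modulo the images of $\mathcal O^\times$ (the global units) and $\Z^\times$, essentially $(\mathcal O/p\mathcal O)^\times / \big(\text{im}(\mathcal O^\times)\cdot (\Z/p\Z)^\times\big)$. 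Because $\Delta<-4$, the unit group $\mathcal O^\times=\{\pm1\}$ contributes nothing beyond $\Z^\times$, so the kernel has order $|(\mathcal O/p\mathcal O)^\times|/|(\Z/p\Z)^\times| = |(\mathcal O/p\mathcal O)^\times|/(p-1)$.

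The final arithmetic is to evaluate $|(\mathcal O/p\mathcal O)^\times|$ according to the splitting of $p$ in $\Q(\sqrt\Delta)$, which is governed by the Kronecker symbol $\big(\frac{\Delta}{p}\big)$:
\begin{align*}
\big(\tfrac{\Delta}{p}\big)=+1:&\quad \mathcal O/p\mathcal O\cong \mathbb F_p\times\mathbb F_p, & |(\mathcal O/p\mathcal O)^\times|&=(p-1)^2,\\
\big(\tfrac{\Delta}{p}\big)=-1:&\quad \mathcal O/p\mathcal O\cong \mathbb F_{p^2}, & |(\mathcal O/p\mathcal O)^\times|&=p^2-1,\\
\big(\tfrac{\Delta}{p}\big)=0:&\quad \mathcal O/p\mathcal O \text{ local, max ideal of index } p, & |(\mathcal O/p\mathcal O)^\times|&=p^2-p.
\end{align*}
Dividing by $p-1$ gives $p-1$, $p+1$, and $p$ respectively, which is exactly $p-\big(\frac{\Delta}{p}\big)$ in all three cases, yielding $h(\Delta p^2)=\big(p-\big(\frac{\Delta}{p}\big)\big)h(\Delta)$. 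The main obstacle I anticipate is the bookkeeping in identifying the kernel precisely — making rigorous the claim that a form of discriminant $\Delta p^2$ that is principal ``upstairs'' corresponds to a well-defined class in $(\mathcal O/p\mathcal O)^\times$ modulo units, and that this correspondence is a bijection onto $\ker\pi$. Rather than develop the full module-theoretic apparatus one could instead give the argument purely in the language of forms, tracking which parallel classes $(A, B+2Ap s, C')$ of a fixed form of discriminant $\Delta p^2$ collapse under $\pi$; this is elementary but requires care with the $p\mid B$ normalization and with the two genuinely different behaviours at ramified versus unramified $p$. I would present the module-theoretic version as the clean proof and relegate the form-theoretic translation to a remark, since the former makes the appearance of $|(\mathcal O/p\mathcal O)^\times|$ — and hence of the symbol $\big(\frac{\Delta}{p}\big)$ — transparent.
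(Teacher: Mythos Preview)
Your approach is correct and is the standard modern proof via the exact sequence relating ring class groups of orders, but it is genuinely different from what the paper does. The paper does not actually give a full proof; it only remarks that the basic tool is the observation that every primitive form of discriminant $\Delta p^2$ is equivalent to a \emph{derived} form $(A,Bp,Cp^2)$ coming from a form $(A,B,C)$ of discriminant $\Delta$, and that one proceeds from there. This is the classical Gauss--Lipschitz argument carried out entirely in the language of forms: one counts how many $\SL_2(\Z)$-classes of discriminant $\Delta p^2$ sit over a given class of discriminant $\Delta$ under the ``underivation'' map $[(A,Bp,Cp^2)]\mapsto[(A,B,C)]$, and finds exactly $p-\big(\tfrac{\Delta}{p}\big)$ of them. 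Your exact-sequence argument is conceptually cleaner and makes the appearance of $(\mathcal O/p\mathcal O)^\times$ --- and hence of the Kronecker symbol --- transparent, at the cost of invoking the ideal/module dictionary that the paper deliberately avoids (its stated aim is to reconstruct P\'epin's purely form-theoretic reasoning). What the paper's approach buys is that it stays within Gauss's framework and needs no ring-theoretic input; what yours buys is a structural explanation of the factor rather than a combinatorial count.

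One small caveat: as written you take $\mathcal O$ to be ``the maximal order'', i.e.\ you implicitly assume $\Delta$ is fundamental, whereas the theorem is stated for arbitrary discriminants $\Delta<-4$. The exact sequence still works with $\mathcal O$ replaced by the order of discriminant $\Delta$ and $\mathcal O'$ its index-$p$ suborder of discriminant $\Delta p^2$ (the unit groups are still $\{\pm1\}$ since $\Delta<-4$), but this deserves a word if you present it this way.
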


The basic tool needed for proving this formula is showing
that every form with discriminant $\Delta p^2$ is equivalent
to a form $(A,Bp,Cp^2)$, which is ``derived'' from the form 
$(A,B,C)$ with discriminant $\Delta$.

Class groups of primitive forms with nonfundamental discriminants
occur naturally in the theory of binary quadratic forms, and
correspond to certain ray class groups (called ring class groups)
in the theory of ideals.

The simplest way of proving (\ref{ELip}) is by using the elementary 
fact that every primitive form with discriminant $\Delta = f^2\Delta_0$
is equivalent to a form $Q = (A,Bf,Cf^2)$. The form $\ov{Q} = (A,B,C)$
is a primitive form with discriminant $\Delta_0$ from which $Q$ is 
derived.

\section{Kaplansky's ``Conjecture''}

Theorem \ref{T2} is related to a question of Kaplansky discussed by
Mollin \cite{MolK1,MolK2} and Walsh \cite{WalK1,WalK2}: Kaplansky
claimed that if a prime $p$ can be written in the form $p = a^2 + 4b^2$,
then the equation $x^2 - py^2 = a$ is solvable. The assumption
$p = a^2 + 4b^2$ implies $p \equiv 1 \bmod 4$, as well as the
solvability of the equation $T^2 - pU^2 = -1$. Since $a^2 = p - 4b^2$,
the form $(1,0,-p)$ with discriminant $\Delta = 4p$ represents $a^2$. 
Since $\gcd(a,4p) = 1$, there is a form $Q$ with discriminant $\Delta$ 
and $[Q^2] = [Q_0]$ which represents $a$. Since the class number 
$h(4p)$ is odd (from (\ref{ELip}) we find that $h(4p) = h(p)$ if
$p \equiv 1 \bmod 8$, and $h(4p) = 3h(p)$ if $p \equiv 5 \bmod 8$; it
is a well known result due to Gauss that the class number of forms with
prime discriminant ist odd), we have $Q \sim Q_0 = (1,0,-p)$, and the 
claim follows.

The obvious generalization of Kaplansky's result is

\begin{prop}
Let $m$ be an integer and $p$ a prime coprime to $2m$. If 
$p = r^2 + ms^2$, then there is a form $Q$ with the following
properties:
\begin{enumerate} 
\item $\disc\ Q = 4pm$;
\item $Q^2 \sim (p,0,-m)$;
\item $Q$ represents $r$.
\end{enumerate}
\end{prop}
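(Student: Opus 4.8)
The plan is to recognise $(p,0,-m)$ as a square in the class group $\Cl^+(4pm)$ by feeding the representation that $p=r^2+ms^2$ hands us into Theorem~\ref{T2}.

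First I would clear away the bookkeeping: the form $(p,0,-m)$ has discriminant $0-4p(-m)=4pm$, and it is primitive because $\gcd(p,0,-m)=\gcd(p,m)=1$, since $p$ is a prime not dividing $m$. The crucial remark is that $(p,0,-m)(1,s)=p-ms^2=r^2$, and since $\gcd(1,s)=1$ this is a \emph{primitive} representation of the square $r^2$ by $(p,0,-m)$. So Theorem~\ref{T2} is set up to apply to $Q=(p,0,-m)$ with the square $m^2=r^2$, provided the coprimality there holds, i.e.\ that $r^2$ is prime to the conductor $f$ of $\Delta=4pm$. I would check $\gcd(r,pm)=1$ directly: a prime $q\mid r$ dividing $p$ would divide $ms^2=p-r^2$, and as $q=p\nmid m$ this forces $p\mid s$, so $p^2\mid r^2+ms^2=p$, which is impossible; and a prime $q\mid r$ dividing $m$ would divide $r^2+ms^2=p$, hence equal $p$ and divide $m$, again impossible. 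Since $f^2\mid 4pm$, the only prime that can divide both $r$ and $f$ is $2$; this is the one genuinely delicate point, to be handled by a small separate argument at $2$ (it cannot occur when $m$ is even, for then $p=r^2+ms^2$ forces $r$ odd).

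Granting the coprimality, the last sentence of Theorem~\ref{T2} produces a primitive form $Q_1$ of discriminant $4pm$ with $[(p,0,-m)]=[Q_1]^2$ and with $Q_1$ representing $r$ primitively; then $Q=Q_1$ satisfies all three properties at once. If one prefers to be explicit: conjugating $(p,0,-m)$ by $\smatr{1}{0}{s}{1}$ yields the equivalent form $(r^2,-2ms,-m)$, and Dirichlet's composition of concordant forms --- exactly as in the proof of Theorem~\ref{T2} --- gives $[(r^2,-2ms,-m)]=[Q]^2$ with $Q=(r,-2ms,-rm)$. This $Q$ has discriminant $4m(r^2+ms^2)=4pm$, represents $r$ via $(1,0)$, and has content $\gcd(r,-2ms,-rm)=\gcd(r,2ms)=\gcd(r,2)$, using $\gcd(r,m)=\gcd(r,s)=1$ (a common prime of $r$ and $s$ would have its square divide $p$).

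The only step here that is not a routine computation is the coprimality/primitivity check of the previous two paragraphs: one must be sure that the square root of $[(p,0,-m)]$ furnished by Theorem~\ref{T2} really is a primitive form representing $r$ itself (and not, say, $-r$ or a proper multiple), i.e.\ that $r$ is prime to the conductor. The value of the discriminant, the primitivity of $(p,0,-m)$, the identity $p-ms^2=r^2$, and the relation $[Q]^2=[(p,0,-m)]$ are all immediate from the machinery already assembled.
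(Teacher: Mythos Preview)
Your approach is exactly the paper's: note that $(p,0,-m)$ primitively represents $r^2=p-ms^2$ via $(1,s)$ and invoke Theorem~\ref{T2} to extract a square root representing $r$. The paper compresses this to a single sentence, ``From $p - ms^2 = r^2$ and $\gcd(r,2pm) = 1$ we deduce that the form $Q_1=(p,0,-m)$ is equivalent to the square of a form representing $r$.'' Your explicit square root $Q=(r,-2ms,-rm)$ and your verification that $\gcd(r,pm)=1$ go beyond what the paper spells out.

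The point you flag at the prime $2$, however, is real, and you should not expect a ``small separate argument'' to dispatch it. When $m$ is odd nothing forces $r$ to be odd: take $m=1$, $p=5=2^2+1\cdot 1^2$ with $r=2$, $s=1$. Then your candidate $(r,-2ms,-rm)=(2,-2,-2)$ has content $2$, and in fact \emph{no} primitive form of discriminant $20$ represents $2$ (any such form would be equivalent to some $(2,2b,C)$ with $b^2-2C=5$, forcing $b$ odd and $C=(b^2-5)/2$ even). So the proposition as literally stated fails here.

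The paper's proof has precisely the same gap: it simply asserts $\gcd(r,2pm)=1$ without justification, which is false in the example above. In the applications the author has in mind (Kaplansky's $p=a^2+4b^2$, and the case $m=2$ treated immediately afterwards) one has $m$ even, hence $r$ odd automatically, and both arguments are then complete. So your write-up matches the paper's intended proof, and you have been more scrupulous than the paper in isolating the one hypothesis that is silently being used.
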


\begin{proof}
From $p - ms^2 = r^2$ and $\gcd(r,2pm) = 1$ we deduce that the 
form $Q_1 = (p,0,-m)$ is equivalent to the square of a form 
representing $r$.
\end{proof}

As an example, let $m = 2$ and consider primes $p = r^2 + 2s^2$. 
Then $p-2s^2 = r^2$, hence the form $Q = (p,0,-2)$ with discriminant
$\Delta = 8p$ represents the square number $r^2$. Thus $Q \sim Q_1^2$
for some form $Q_1$ representing $r$. 

Now assume that $p \equiv 1 \bmod 8$. By genus theory, the 
class of the form $Q_1$ will be a square if and only if 
$(\frac2r) = (\frac rp) = 1$. Since $(\frac2r) = (\frac pr)
 = (\frac rp)$ by the quadratic reciprocity law, $[Q_1]$ is
a square if and only if $(\frac rp) = 1$. The congruence
$t^2 \equiv -2s^2 \bmod p$ implies 
$\sym{r^2}{p} = (\frac rp) = \sym{2}{p} (\frac sp)$;
writing $s = 2^ju$ for some odd integer $u$ we find
$(\frac sp) = (\frac 2p)^j (\frac up) = (\frac pu) = 1$.
We have proved (see Kaplan \cite{Kap} for proofs of this and a 
lot of other similar results):

\begin{prop}\label{PKap2}
Let $p = r^2 + 2s^2 \equiv 1 \bmod 8$ be a prime. Then the 
class of the form $Q = (p,0,-2)$ in $\Cl(8p)$ is a fourth power 
if and only if $\sym{2}{p} = +1$.
\end{prop}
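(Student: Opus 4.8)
The plan is to reduce the proposition, with the help of the discussion preceding it, to a statement about the $2$-torsion of the class group. Recall that $Q(1,s) = p - 2s^2 = r^2$ is a primitive representation of the square $r^2$ by $Q = (p,0,-2)$, so by Theorem~\ref{T2} we have $[Q] = [Q_1]^2$ for a primitive form $Q_1$ representing $r$; and the reciprocity computation given just before the proposition (which uses $p = r^2 + 2s^2$ together with $\sym{-1}{p} = +1$, the latter because $p \equiv 1 \bmod 8$) shows that $[Q_1]$ is a square in $\Cl(8p)$ exactly when $\sym{2}{p} = +1$. So it is enough to prove that $[Q]$ is a fourth power in $\Cl(8p)$ if and only if $[Q_1]$ is a square there. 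One direction is trivial, as $[Q] = [Q_1]^2$.

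For the other direction, note first that $[Q]$ has order dividing $2$, since $Q = Q^-$ and hence $[Q]^{-1} = [Q^-] = [Q]$. Suppose $[Q] = [R]^4$. If $[Q] = [1]$, then $Q$ represents $1$, say $pu^2 - 2v^2 = 1$; then $(2v)^2 \equiv -2 \bmod p$, and feeding this congruence into the reciprocity computation of the preceding paragraph in place of $r^2 \equiv -2s^2 \bmod p$ (write $v = 2^j w$ with $w$ odd and use $pu^2 \equiv 1 \bmod w$, so that $p$ is a square modulo $w$) yields $\sym{2}{p} = \big(\frac{v}{p}\big) = +1$, hence $[Q_1]$ is a square. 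If $[Q] \neq [1]$, then $[Q]$ is an element of order exactly $2$ that is also a square, namely $[Q_1]^2$; by genus theory $\Cl(8p)$ has $2$-rank $1$ (the number of prime divisors of $8p$ minus one), so $\Cl(8p)[2]$ has order $2$ and therefore equals $\{[1],[Q]\} \subseteq \Cl(8p)^2$. Then $[Q_1][R]^{-2}$ squares to $[Q]\,[Q]^{-1} = [1]$, so it lies in $\Cl(8p)[2] \subseteq \Cl(8p)^2$, and hence $[Q_1] = [R]^2\cdot(\text{a square})$ is a square. In both cases $[Q_1]$ is a square, so $\sym{2}{p} = +1$, which proves the proposition.

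The real content is the assertion that every $2$-torsion class of $\Cl(8p)$ is a square, equivalently that the $2$-primary part of $\Cl(8p)$ has no cyclic summand of order $2$ (its $4$-rank equals its $2$-rank, namely $1$). Above this is extracted from the identity $[Q] = [Q_1]^2$ together with the elementary separate treatment of the case $[Q] = [1]$; alternatively it follows from genus characters, since for $p \equiv 1 \bmod 8$ each of the ambiguous forms $(2,0,-p)$, $(p,0,-2)$, $(2p,0,-1)$ of discriminant $8p$ represents some integer $\equiv \pm 1 \bmod 8$ on which both genus characters $\big(\frac{2}{\cdot}\big)$ and $\big(\frac{\cdot}{p}\big)$ are trivial, so the nontrivial ambiguous class lies in the principal genus. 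This $4$-rank statement is exactly where the hypothesis $p \equiv 1 \bmod 8$ is used; everything else is bookkeeping already done before the proposition.
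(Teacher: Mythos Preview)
Your argument is correct and follows the paper's approach: the paragraph preceding the proposition already establishes, via genus characters and reciprocity, that $[Q_1]$ is a square in $\Cl^+(8p)$ if and only if $\sym{2}{p}=+1$, and the paper then simply declares the proposition proved. What you add is an explicit justification of the step the paper leaves implicit, namely that $[Q]=[Q_1]^2$ is a fourth power precisely when $[Q_1]$ is a square. Your treatment of this---separating the case $[Q]=[1]$ (handled by the Pell-type relation $pu^2-2v^2=1$ and the same reciprocity computation) from the case $[Q]\ne[1]$ (where the $2$-rank~$1$ of $\Cl^+(8p)$ forces $\Cl^+(8p)[2]=\{[1],[Q]\}\subset\Cl^+(8p)^2$)---is a genuine and necessary completion: without it, one has not excluded the possibility that $[Q]$ is trivial while $[Q_1]$ generates a cyclic $2$-Sylow of order~$2$. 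Your closing remark that this amounts to showing the $4$-rank equals the $2$-rank (every ambiguous class lies in the principal genus when $p\equiv 1\bmod 8$) is the clean structural way to phrase the same fact.
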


Note that this does not necessarily imply that the class number 
$h(8p)$ is divisible by $4$ since $Q$ might be equivalent
to the principal form. In fact, this always happens if 
$r = 1$, since then $Q$ represents $1$. In this case, we get
a unit in $\Z[\sqrt{2p}\,]$ for free because $p-2s^2 = 1$ 
implies that $(\sqrt{p}+s\sqrt{2}\,)^2 = p+2s^2 + 2s\sqrt{2p}$
is a nontrivial unit. Observe that the field is of Richaud-Degert
type since $2p = (2s)^2 + 2$.


\section{P\'epin's Theorems}

The simplest among the about 100 theorems stated by P\'epin
\cite{Pep74, Pep79, Pep80, Pep82} are the following:

\begin{prop}
In the table below, $Q$ is a positive definite form with discriminant 
$\Delta = -4m$. For any prime $p \nmid \Delta$ represented by $Q$
(the table below gives two small values of such $p$), the equations 
$pX^4 - mY^4 = z^2$ only have the trivial solution.
$$ \begin{array}{c|cccccc}
    \rsp   Q & (2,0,7) & (2,2,9)  & (4,0,5)  
             & (4,4,9) & (4,0,9) & (3,0,13) \\ \hline 
    \rsp   m &   14    & 17  &  20 &  32 &  36 & 39 \\ \hline
    \rsp   p & 71, 79  & 13, 89 & 41, 149 & 17,89  & 13,73 & 61, 79  
   \end{array} $$ 
\end{prop}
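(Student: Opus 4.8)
The plan is to translate the nonsolvability of $pX^4 - mY^4 = z^2$ into a statement about fourth powers in the class group $\Cl^+(-4m)$ and then use the fact that the relevant forms lie in a nontrivial genus coset. Suppose $p a^4 - m b^4 = c^2$ is a nontrivial solution; we may assume $\gcd(a,b) = 1$. Then the form $Q$ (which represents $p$ primitively, say $Q(r,s) = p$ with $\gcd(r,s) = 1$) and the form $(p,0,m)$ both represent values of a convenient shape: from $p a^4 = c^2 + m b^4$ we see that $(p,0,m)$ represents $c^2 + m b^4 = p a^4$, i.e. it represents $p \cdot (a^2)^2$. Apply Corollary \ref{C1} with $Q_1 = Q$, $Q_2 = (p,0,m)$, and $r$ there equal to $a^4$: after composing we get that $p^2 a^4$ is primitively represented by a form $Q_3$ in the class $[Q][(p,0,m)]^{\pm 1}$. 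Since $(p,0,m) \sim Q^{-1}$ would be the cleanest situation, I would first check the arithmetic of each row to arrange that $[Q]^{2} = [(p,0,m)]^{-1}$ or a similar relation holds in $\Cl^+(-4m)$; this is where the specific values of $m$ (all with small, explicitly computable class group) are used.

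The key step is then to invoke Theorem \ref{T2}. Having produced a form representing a square $m_0^2$ coprime to $\Delta = -4m$ (all the $m$ in the table are such that $\Delta$ is fundamental or has conductor $f = 1$ or $2$, which must be checked case by case), Theorem \ref{T2} forces the class of that form to be a square in $\Cl^+(-4m)$. Combined with the composition relation above, this pins down the class of $Q$ itself: I would aim to show that the existence of a nontrivial solution implies $[Q]$ is a fourth power (or at least a square) in $\Cl^+(-4m)$. On the other hand, for each of the six forms in the table one computes directly — the discriminants are tiny — that $[Q]$ is \emph{not} a square in $\Cl^+(-4m)$, i.e. $Q$ lies in a non-principal genus. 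This contradiction establishes nonsolvability. The genus-theoretic obstruction is detectable by the generic characters (Legendre symbols $\big(\tfrac{\cdot}{q}\big)$ for primes $q \mid \Delta$), so verifying "$[Q]$ not a square" reduces to evaluating a handful of quadratic residue symbols for each $m$.

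A subtlety I would handle carefully: Corollary \ref{C1} only guarantees the primitive representation of $p^2 a^4$ by \emph{one} of $[Q][(p,0,m)]$ or $[Q][(p,0,m)]^{-1}$, and a priori we do not know which. So the argument must run: whichever sign $g \in \{\pm 1\}$ works, the resulting class $[Q][(p,0,m)]^{g}$ contains a form primitively representing a square coprime to $\Delta$, hence is a square class by Theorem \ref{T2}; since $[(p,0,m)]$ is itself a square (or a known class — again checked per row), we conclude $[Q]$ is a square, contradicting the genus computation. The reduction to $\gcd$ conditions and coprimality with the conductor must also be verified in each of the six cases, but for $m \in \{14,17,20,32,36,39\}$ these are short finite checks.

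The main obstacle I anticipate is not any single deep step but the bookkeeping of descent: ensuring that from a solution in \emph{fourth} powers (not just squares) one extracts the sharper conclusion "$[Q]$ is a fourth power" when that is what the genus computation actually rules out, versus only "$[Q]$ is a square." In rows where $[Q]$ already fails to be a square the weaker conclusion suffices; in rows where $[Q]$ is a square but not a fourth power (this may happen for, e.g., $m = 36$ where $\Delta = -144$ has nontrivial $2$-part structure), I would need the refined "$Q_1$ represents $m$ primitively" clause of Theorem \ref{T2}, iterated once more, to descend the extra level. Getting that iteration to interface correctly with the ambiguity in the sign $g$ from Corollary \ref{C1} is the delicate point; everything else is routine quadratic-residue arithmetic with six small discriminants.
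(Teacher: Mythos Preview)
There is a genuine gap, and it stems from a factual error about the six forms. You assert that for each row one checks ``$[Q]$ is not a square in $\Cl^+(-4m)$, i.e.\ $Q$ lies in a non-principal genus,'' and plan to derive a contradiction from ``$[Q]$ is a square.'' But in \emph{every} row $[Q]$ \emph{is} a square: each $Q$ lies in the principal genus (e.g.\ $(2,0,7)$ represents $2$, and $(2/7)=+1$; similarly for the others). What the paper actually exploits is that each $[Q]$ is a square but \emph{not a fourth power} --- for $m=14,17,20,32,36,39$ the class group $\Cl^+(-4m)$ is cyclic of order $4$, and $[Q]$ is the unique element of order $2$. So the ``weaker conclusion suffices'' branch of your plan never applies; you must produce the stronger conclusion ``$[Q]$ is a fourth power'' in all six cases, and your outline does not contain the step that does this.

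The missing idea is as follows. First, the form to compose with is the principal form $Q_0=(1,0,m)$, not $(p,0,m)$ (the latter has discriminant $-4pm$, not $-4m$, and does not represent $pa^4$ anyway). From $pa^4=c^2+mb^4$ one gets that $Q_0$ represents $pa^4$; composing with $Q$ (which represents $p$) via Corollary~\ref{C1} shows that $Q$ itself represents $p^2a^4$ primitively, since $[Q][Q_0]^{\pm1}=[Q]$. Theorem~\ref{T2} then gives a form $Q_1$ with $[Q_1]^2=[Q]$ that represents $pa^2$. Now comes the step you are lacking: both $Q$ and $Q_1$ represent $p$ times a square, so by Corollary~\ref{Cgen} they lie in the \emph{same genus}. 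Since $[Q]$ is in the principal genus, so is $[Q_1]$; hence $[Q_1]$ is itself a square, and $[Q]=[Q_1]^2$ is a fourth power --- the desired contradiction. Your proposed ``iterate Theorem~\ref{T2} once more'' does not work here, because a bare square root $Q_1$ need not represent a square; it is the genus comparison between $Q$ and $Q_1$ (both representing $p\cdot\text{square}$) that forces $[Q_1]$ down into the principal genus.
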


By looking at these results from the theory of binary quadratic
forms one is quickly led to observe that the equivalence classes
of the forms $Q$ in P\'epin's examples are squares but not fourth
powers. Such forms occur only for discriminants $\Delta = -4m$
for which $\Cl(\Delta)$ has a cyclic subgroup of order $4$. The
table below lists all positive $m \le 238$ with the property that 
$\Cl(-4m)$ has a cyclic subgroup of order $4$, forms $Q$ whose 
classes are squares but not fourth powers, the structure of the 
class group, a comment indicating the proof of the result, and a 
reference to the paper of P\'epin's in which it appears.

\begin{table}[ht!]
   \begin{tabular}{r|l|l|l|c}
      m  &   Q      & $\Cl(-4m)$ & \text{comment} & \text{ref} \\  \hline
     14  & (2,0,7)  & [4]  & \ref{TPepG}.\ref{T51} & \cite{Pep80} \\
     17  & (2,2,9)  & [4]  & \ref{TPepG}.\ref{T51} & \cite{Pep80} \\
     20  & (4,0,5)  & [4]  & \ref{TPepG}.\ref{T54} & \cite{Pep80} \\
     32  & (4,4,9)  & [4]  & \ref{TPepG}.\ref{T55} & \cite{Pep80} \\
     34  & (2,0,17) & [4]  & \ref{TPepG}.\ref{T51} & \cite{Pep80} \\
     36  & (4,0,9)  & [4]  & \ref{TPepG}.\ref{T56} & \cite{Pep74} \\
     39  & (3,0,13) & [4]  & \ref{TPepG}.\ref{T53} & \cite{Pep80} \\
     41  & (5,4,9)  & [8]  & \ref{TPepG}.\ref{T51} & \cite{Pep79} \\
     46  & (2,0,23) & [4]  & \ref{TPepG}.\ref{T51} & \cite{Pep80} \\
     49  & (2,2,25) & [4]  & \ref{TPepG}.\ref{T52}, f=7 & \cite{Pep80} \\
     52  & (4,0,13) & [4]  & \ref{TPepG}.\ref{T54} & \cite{Pep80} \\
     55  & (5,0,11) & [4]  & \ref{TPepG}.\ref{T53} & \cite{Pep80} \\
     56  & (4,4,15) & [4, 2]  & $56 = 4 \cdot 14$ & \cite{Pep80} \\
     62  & (9,2,7)  & [8]  & \ref{TPepG}.\ref{T51} & \cite{Pep79} \\
     63  & (7,0,9)   & [4] & \ref{TPepG}.\ref{T52}, f=3 & \cite{Pep80} \\
     64  & (4,4,17)  & [4] & \ref{TPepG}.\ref{T55} & \cite{Pep80} \\
     65  & (10,10,9) & [4, 2]  & \ref{TPepG}.\ref{T51} & \cite{Pep74} \\
     66  & (3,0,22)  & [4, 2]  & \ref{TPepG}.\ref{T51} & \cite{Pep80} \\
     68  & (8,2,9)\hspace{-1.05cm}--------- (8,4,9)   
                     & [8]     & $68 = 4 \cdot 17$ & \cite{Pep79} \\
     69  & (6,6,13)  & [4, 2]  & \ref{TPepG}.\ref{T51} & \cite{Pep74} \\
     73  & (2,2,37)  & [4]     & \ref{TPepG}.\ref{T51} & \cite{Pep80} \\
     77  & (14,14,9) & [4, 2]  & \ref{TPepG}.\ref{T51} & \cite{Pep80} \\
     80  & (9,2,9)   & [4, 2]  & $80 = 4 \cdot 20$ & \cite{Pep80} \\
     82  & (2,0,41)  & [4]     & \ref{TPepG}.\ref{T51} & \cite{Pep80} \\
     84  & (4,0,21)  & [4, 2]  & \ref{TPepG}.\ref{T54} & \cite{Pep80} \\
     89  & (9, 2, 10), (2,2,45) & [12]   & \ref{TPepG}.\ref{T51} & -- \\
     90  & (9,0,10)  & [4, 2]  & \ref{TPepG}.\ref{T52}, f=3 & \cite{Pep74} \\
     94  & (7,4,14)  & [8] & \ref{TPepG}.\ref{T51} & -- \\ 
     95  & (9,4,11)  & [8] & \ref{TPepG}.\ref{T53} & -- \\
     96  & (4,4,25)  & [4, 2]  & \ref{TPepG}.\ref{T55} & \cite{Pep80} \\
     97  & (2,2,49)  & [4] & \ref{TPepG}.\ref{T51} & \cite{Pep80} \\
     98  & (9,2,11)  & [8] & \ref{TPepG}.\ref{T52}, f=7 & -- \\
    100  & (4,0,25)  & [4] & incorrect & \cite{Pep80} \\
    111  & (7,2,16) & [8]  & \ref{TPepG}.\ref{T53} & \cite{Pep79} \\
    113  & (9,4,13) & [8]  & \ref{TPepG}.\ref{T51} & \cite{Pep74} \\
    114  & (2,0,57)\hspace{-1.18cm}----------- (6,0,19) 
         & [4, 2] & \ref{TPepG}.\ref{T51} & \cite{Pep82} \\
    116  & (9,2,13), (4,0,29) & [12] & \ref{TPepG}.\ref{T54} & -- \\ 
    117  & (9,0,13) & [4, 2] & \ref{TPepG}.\ref{T52},f=3 & \cite{Pep74} \\
    126  & (7,0,18) & [4, 2] & $126 = 9 \cdot 14$ & \cite{Pep80} \\
    128  & (9,8,16) & [8]    & $128 = 4 \cdot 32$ & \cite{Pep74}  \\
    132  & (4,0,33) & [4, 2] & \ref{TPepG}.\ref{T54} & \cite{Pep80} \\
    136  & (8,0,17) & [4, 2] & $136 = 4 \cdot 34$ & \cite{Pep80} \\ 
    137  & (9,8,17) & [8]  & \ref{TPepG}.\ref{T51} & \cite{Pep74} 
    \end{tabular} \bigskip
\caption{Unsolvable Equations $px^4 - my^4 = z^2$.}\label{Tmp}
\end{table}

\begin{table}[ht!]
   \begin{tabular}{r|l|l|l|c}
      m  &   Q      & $\Cl(-4m)$ & \text{comment} & \text{ref} \\  \hline
    138  & (3,0,46) & [4, 2] & \ref{TPepG}.\ref{T51} & \cite{Pep80} \\
    141  & (6,6,25) & [4, 2] & \ref{TPepG}.\ref{T51} & \cite{Pep74} \\
    142  & (2,0,71) & [4]    & \ref{TPepG}.\ref{T51} & \cite{Pep80} \\
    144  & (9, 0, 16) & [4, 2] & $144 = 4 \cdot 36$ & -- \\ 
    145  & (5,0,29) & [4, 2] & \ref{TPepG}.\ref{T51}  & \cite{Pep80} \\
    146  & (3, 2, 49) & [16] & \ref{TPepG}.\ref{T51}  & -- \\ 
    148  & (4,0,37) & [4]    & \ref{TPepG}.\ref{T54}  & \cite{Pep80} \\
    150  & (6,0,25) & [4, 2]  & incorrect & \cite{Pep82} \\
    153  & (13,4,13)\hspace{-1.35cm}------------ (13,8,13)   
         & [4, 2]  & $153 = 9 \cdot 17$ & \cite{Pep82} \\
    154  & (11,0,14) & [4, 2]  & \ref{TPepG}.\ref{T51}  & \cite{Pep82} \\
    155  & (9, 8, 19), (5,0,31)  & [12]& \ref{TPepG}.\ref{T54}   & -- \\ 
    156  & (12,0,13) & [4, 2]  & $156 = 4 \cdot 39$ & \cite{Pep82} \\
    158  & (9,4,18) & [8]  & \ref{TPepG}.\ref{T51}  & \cite{Pep79} \\
    160  & (4,4,41) & [4, 2]  & \ref{TPepG}.\ref{T55}  & \cite{Pep82} \\
    161  & (9, 2, 18) & [8, 2]  & \ref{TPepG}.\ref{T51}   & -- \\ 
    164  & (9, 8, 20) & [16]  & $164 = 4 \cdot 41$ & -- \\ 
    171  & (7, 4, 25), (9,0,19) & [12]  & \ref{TPepG}.\ref{T52}, f=3  & -- \\ 
    178  & (11,6,17) & [8]  & \ref{TPepG}.\ref{T51}  & \cite{Pep79} \\
    180  & (9,0,20)\hspace{-1.18cm}----------- (4,0,45) 
         & [4, 2]  & $180 = 9 \cdot 20$ & \cite{Pep82} \\
    183  & (13,10,16) & [8]  & \ref{TPepG}.\ref{T53} & \cite{Pep79} \\
    184  & (8,8,25) & [4, 2]  & $184 = 4 \cdot 46$ & \cite{Pep82} \\
    185  & (9,4,21) & [8, 2]  & \ref{TPepG}.\ref{T51}  & -- \\ 
    192  & (4,2,29)\hspace{-1.18cm}----------- (4,4,49) 
            & [4, 2]  & \ref{TPepG}.\ref{T55}  & \cite{Pep82} \\
    193  & (2,2,97) & [4]  & \ref{TPepG}.\ref{T51}   & \cite{Pep80} \\
    194  & (11, 4, 18), (6,4,33), (2,0,97) 
                      & [20]  & \ref{TPepG}.\ref{T51} & -- \\ 
    196  & (8,4,25) & [8]  & $196 = 4 \cdot 49$ & \cite{Pep79} \\
    198  & (9,0,22) & [4, 2] & \ref{TPepG}.\ref{T52}, f=3
                                    & \cite{Pep74,Pep82} \\
    203  & (9,4,23), (7,0,29) &  [12]  & \ref{TPepG}.\ref{T53}  & -- \\ 
    205  & (5,0,41) &  [4, 2]  & \ref{TPepG}.\ref{T51}  & \cite{Pep82} \\
    206  & (9, 2, 23) (14,4,15), 2,0,103) 
                       & [20] & \ref{TPepG}.\ref{T51} & -- \\ 
    208  & (16,8,17)\hspace{-1.35cm}------------ (16,16,17)  
         &  [4, 2]  & $208 = 4 \cdot 52$ & \cite{Pep82} \\
    212  & (9, 4,24), (4,0,53) &  [12]  & \ref{TPepG}.\ref{T54}   & -- \\ 
    213  & (6,6,37) &  [4, 2]  & \ref{TPepG}.\ref{T51} & \cite{Pep74} \\
    217  & (2, 2, 109)  & [4, 2]  & \ref{TPepG}.\ref{T51} & \cite{Pep74}  \\
    219  & (12, 6, 19), (3,0,73) & [12] & \ref{TPepG}.\ref{T53}  & -- \\
    220  & (5, 0, 44) & [4, 2] & $220 = 4 \cdot 55$ & \cite{Pep82} \\
    221  & (9,4,25)   & [8, 2] & \ref{TPepG}.\ref{T51} & -- \\
    224  & (9,2,25)   & [8, 2] & \ref{TPepG}.\ref{T55} & -- \\
    225  & (9,0,25)   & [4, 2] & incorrect & \cite{Pep82} \\
    226  & (11,8,22)  & [8]    & \ref{TPepG}.\ref{T51} &  \\
    228  & (4,0,57)   & [4, 2] & \ref{TPepG}.\ref{T54} & -- \\
    233  & (9,2,26), (2,2,117) &  [12]  & \ref{TPepG}.\ref{T51} & -- \\
    238  & (2, 0,119) & [4, 2] & \ref{TPepG}.\ref{T51} &  \cite{Pep82}
    \end{tabular}
\medskip
   \caption{Unsolvable Equations $px^4 - my^4 = z^2$.}\label{Tmp2}
   \end{table}

\begin{table}[ht!]
   \begin{tabular}{r|l|l|l|c}
      m  &   Q      & $\Cl(-4m)$ & \text{comment} & \text{ref} \\  \hline
    265  & (10,10,29)  & [4,2]   & \ref{TPepG}.\ref{T51} & \cite{Pep74} \\
    301  & (14,14,25)  & [4,2]   & \ref{TPepG}.\ref{T51} & \cite{Pep74} \\
    360  & (9,0,40)    & [4,2,2] & $360 = 4 \cdot 90$ & \cite{Pep74} \\
    465  & (10,10,49)  & [4,2,2] & \ref{TPepG}.\ref{T51} & \cite{Pep74} \\
    522  & (9,0,58)    & [4,2]   & \ref{TPepG}.\ref{T51}, f=3 & \cite{Pep74} \\
    553  & (2,2,227)   & [4,2]   & \ref{TPepG}.\ref{T51} & \cite{Pep74} \\
    561  & (34,34,25) $\sim$ (25,16,25) & [4,2,2] & \ref{TPepG}.\ref{T51} 
                                                  & \cite{Pep74} \\
    609  & (42,42,25) $\sim$ (25,8,25)  & [4,2,2] & \ref{TPepG}.\ref{T51} 
                                                  & \cite{Pep74} \\
    645  & (6,6,109)   & [4,2,2] & \ref{TPepG}.\ref{T51} & \cite{Pep74} \\
    697  & (2,2,349)   & [4,2]   & \ref{TPepG}.\ref{T51} & \cite{Pep74} \\
    792  & (9,0,88)    & [4,2,2] & $792=4 \cdot 198$ & \cite{Pep74} \\
   1764  & (4,0,441), (25,12,72), (9,0,196)  
         & [8,4] & $1764 = 42^2$ & \cite{Pep74} \\
   3185  & (9,2,354)   & [16,2,2] & $3185 = 7^2 \cdot 65$ & \cite{Pep74} \\
   4356  & (4,0,1089), (148,96,45), (229,74,25) 
                       & [12,4] & $4256 = 66^2$ & \cite{Pep74} \\
   4950  & (31,28,166), (9,0,550),  
         & [12,2,2] & $4950 = 15^2 \cdot 22$ & \cite{Pep74} \\
   8349  & (25,2,254), (49,36,177), 70,62,133) 
         & [24,2,2] & $8349 = 11^2 \cdot 39$ & \cite{Pep74} \\ \hline
    256  & (16,8,17)  & [8]   & $256 = 4 \cdot 64$ & \cite{Pep79} \\
    289  & (13,12,25) & [8]   & incorrect & \cite{Pep79} \\
    292  & (8,4,37)   & [8]   & $292 = 4 \cdot 73$ &   \cite{Pep79} \\
    295  & (16,6,19)  & [8]   & \ref{TPepG}.\ref{T53} & \cite{Pep79} \\
    313  & (13,10,26) & [8]   & \ref{TPepG}.\ref{T51} & \cite{Pep79} \\ \hline 
    252  & (9,0,28)   & [4,2] & $252 = 4 \cdot 63$ &   \cite{Pep82} \\ 
    282  & (3,0,94)   & [4,2] & \ref{TPepG}.\ref{T51} &   \cite{Pep82} \\ 
    288  & (4,2,73)\hspace{-1.18cm}-----------  (4,4,73) 
                      & [4,2] & $288 = 9 \cdot 32$ &   \cite{Pep82} \\ 
    310  & (10,0,31)  & [4,2] & \ref{TPepG}.\ref{T51} &   \cite{Pep82} \\ 
    322  & (2,0,161)  & [4,2] & \ref{TPepG}.\ref{T51} &   \cite{Pep82} \\ 
    328  & (8,0,41)   & [4,2] & $328 = 4 \cdot 82$ &   \cite{Pep82} \\ 
    333  & (9,0,37)   & [4,2] & $333 = 9 \cdot 37$ &   \cite{Pep82} \\ 
    340  & (4,0,85)   & [4,2] & $340 = 4 \cdot 65$ &   \cite{Pep82} \\ 
    352  & (4,2,89)\hspace{-1.18cm}-----------  (4,4,89)  
                      & [4,2] & \ref{TPepG}.\ref{T55} &   \cite{Pep82} \\ 
    372  & (4,0,93)   & [4,2] & $372 = 4 \cdot 39$ &   \cite{Pep82}  
    \end{tabular}
\medskip
   \caption{More of P\'epin's Examples.}\label{Tmp3}
   \end{table}


P\'epin must have been aware of the connection between his claims 
and the structure of the class group for the following reasons:
\begin{enumerate}
\item The examples he gives in \cite{Pep79} all satisfy
      $\Cl(-4m) = [8]$ (the cyclic group of order $8$), and
      those in \cite{Pep82} satisfy $\Cl(-4m) = [4,2]$.
\item P\'epin omits all values of $m$ for which the class number 
      $h(-4m)$ is divisible by $3$ or $5$, except for three examples
      given in \cite{Pep74}.
\item Most of the misprints in his list concern the middle coefficient
      of the forms $Q$, which is sometimes given as half the correct
      value; a possible explanation is provided by the fact that Gauss 
      used the notation $(A,B,C)$ for the form $Ax^2 + 2Bxy + Cy^2$.
\end{enumerate}

The following result covers all examples in our table:

\begin{thm}\label{TPepG}
Let $m$ be a positive integer, let $Q$ be a quadratic form with 
discriminant $\Delta = -4m$, and assume that $[Q]$ is a square, 
but not a fourth power in $\Cl(\Delta)$. Let $p \nmid \Delta$ 
be a prime represented by $Q$. Then the diophantine equation 
$px^4 - my^2 = z^2$ has nontrivial integral solutions, and if
one of the following conditions is satisfied, $px^4 - my^4 = z^2$ 
has only the trivial solution: 
\begin{enumerate}
\item\label{T51} $\Delta$ is a fundamental discriminant.
\item\label{T52} $\Delta = \Delta_0 f^2$ for some fundamental discriminant
      $\Delta_0$ and an odd squarefree integer $f$ such that 
      $(\frac{\Delta_0}{q}) = -1$ for all $q \mid f$.
\item\label{T53} $\Delta = 4 \Delta_0$, where $\Delta_0 \equiv 1 \bmod 8$ is
      fundamental.  
\item\label{T54} $\Delta = 4 \Delta_0$, where $\Delta_0 =4n$ is
      fundamental and $n \equiv 1 \bmod 4$.  
\item\label{T55} $\Delta = 16 \Delta_0$, where $8 \mid \Delta_0$. 
\item\label{T56} $\Delta = 4f^2 \Delta_0$ for some odd integer $f$, 
      where $\Delta_0 = 4n$ with $n \equiv 1 \bmod 4$, and 
      $(\Delta_0/q) = -1$ for all primes $q \mid f$. 
\end{enumerate}
\end{thm}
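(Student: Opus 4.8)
The plan is to split the statement into its two halves and handle the positive (solvability) claim first, since it is the easy one and uniform. Because $[Q]$ is a square in $\Cl(-4m)$, by Theorem~\ref{T2} (the equivalence i$'$) $\Leftrightarrow$ ii)) the form $Q$ represents a square $z_0^2$ coprime to $\Delta$; and since $Q$ represents the prime $p\nmid\Delta$, the Gaussian composition identity~(\ref{EGcom}) lets me multiply representations, so $Q$ represents $p z_0^2$ as well. Rescaling, one sees directly that $Q(x,y)=p$ together with the representation of a square yields a point with $px^4$ of the required shape; more precisely, writing $Q=(p,B,C)$ (possible since $Q$ represents $p$ primitively) and clearing the $x^2$-coefficient, the relation $4pQ(x,y)=(2px+By)^2-\Delta y^2=(2px+By)^2+4my^2$ shows that any representation of $p$ by $Q$ gives $4p^2=(\cdots)^2+4m y^2$, i.e.\ a solution of $pX^2-mY^2=Z^2$ with $X=2p$ (up to relabelling); then feeding in the represented square $z_0^2$ and composing produces a genuine solution of $px^4-my^2=z^2$. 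I expect this half to take only a few lines once the bookkeeping with~(\ref{EGcom}) and the identity $4AQ(x,y)=(2Ax+By)^2-\Delta y^2$ is set up carefully.

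For the negative (unsolvability) claim the strategy is classical descent. Suppose $px^4-my^4=z^2$ has a nontrivial primitive solution; I would factor this in $\Q(\sqrt{\Delta})=\Q(\sqrt{-m})$ as $(z-y^2\sqrt{-m})(z+y^2\sqrt{-m})=p x^4$ (after suitably normalizing the equation so the coefficient structure matches the discriminant $-4m$), analyze the gcd of the two factors using that $p\nmid\Delta$ and the solution is primitive, and conclude that the ideal $(z+y^2\sqrt{-m})$ is, up to the prime above $p$ and controlled ramified contributions, a fourth power of an ideal times the ideal class of $Q$. Translating back into the language of forms via the isomorphism between $\Cl^+(\Delta)$ and the ideal class group in the strict sense: a nontrivial solution forces the class $[Q]$ (which represents $p$) to become a fourth power in $\Cl(\Delta)$ — but by hypothesis $[Q]$ is a square and \emph{not} a fourth power, a contradiction. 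The heart of the argument is thus the descent step showing ``$p x^4$ a value of the relevant norm form $\Rightarrow$ the class of $p$ is a fourth power'', which is essentially Theorem~\ref{T2} applied one level up (a square that is itself represented, pushed through the composition law) combined with Corollary~\ref{C1} to guarantee that the repeated representations stay \emph{primitive}, so that the square-root extractions in the class group are legitimate.

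The reason the theorem has six cases is that the descent above only works cleanly when one can move freely between the order of conductor $f$ and the maximal order, and when the local behaviour at $2$ is under control; each hypothesis (\ref{T51})--(\ref{T56}) is exactly a condition guaranteeing that the ``obstruction'' introduced by the nonfundamental conductor or by the prime $2$ does not create a spurious fourth root. Concretely: in case~(\ref{T51}) the discriminant is fundamental and there is nothing to check; in cases~(\ref{T52}) and~(\ref{T56}) the condition $(\Delta_0/q)=-1$ for $q\mid f$ means $q$ is inert, so by the class-number relation~(\ref{ELip}) the map $\Cl(\Delta)\to\Cl(\Delta_0)$ on the relevant subgroups is injective on $2$-power torsion and being a fourth power can be tested in the maximal order; cases~(\ref{T53}), (\ref{T54}), (\ref{T55}) are the dyadic analogues, where $\Delta_0\equiv 1\bmod 8$, $\Delta_0=4n$ with $n\equiv1\bmod4$, or $8\mid\Delta_0$ pin down the $2$-adic structure so that the factor $(2-(\Delta/2))$ or the corresponding genus-theoretic $2$-contribution is trivial. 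So the plan for the final part is: (i) prove the descent-to-fourth-power statement for fundamental $\Delta$ using Theorem~\ref{T2} and Corollary~\ref{C1}; (ii) in each nonfundamental case, use~(\ref{ELip}) and the explicit ``derived form'' $(A,Bf,Cf^2)\leftrightarrow(A,B,C)$ to transfer the conclusion from $\Cl(\Delta)$ to $\Cl(\Delta_0)$, checking that under the stated hypothesis the transfer preserves ``square but not fourth power.''

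\textbf{Main obstacle.} The delicate point is step~(ii): controlling the $2$-part of the kernel and cokernel of the natural map $\Cl(\Delta)\to\Cl(\Delta_0)$, and doing the dyadic genus-theory bookkeeping, so that the three conditions on $\Delta_0\bmod 8$ really do rule out every way in which $[Q]$ could fail to be a fourth power in $\Cl(\Delta)$ while its image is one in $\Cl(\Delta_0)$ (or vice versa). Equivalently, the hard part is to show that each hypothesis (\ref{T52})--(\ref{T56}) is \emph{sufficient}, which amounts to a careful case analysis of how the prime $2$ and the inert primes $q\mid f$ interact with the descent; everything else is the relatively mechanical translation between forms, ideals, and the curve $px^4-my^4=z^2$.
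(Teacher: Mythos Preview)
Your core strategy for the unsolvability half --- show that a nontrivial solution of $px^4 - my^4 = z^2$ forces $[Q]$ to be a fourth power --- is exactly the paper's Proposition~\ref{Pmain}, just rephrased in ideal language rather than form language. Where you diverge from the paper, and where there is a real gap, is in your treatment of the six cases.

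You propose to handle cases~(\ref{T52})--(\ref{T56}) by controlling the map $\Cl(\Delta)\to\Cl(\Delta_0)$ and arguing that ``square but not a fourth power'' transfers. This does not work as stated. In case~(\ref{T52}), for instance, the hypothesis $(\Delta_0/q)=-1$ gives via~(\ref{ELip}) that the kernel of $\Cl(\Delta_0 q^2)\to\Cl(\Delta_0)$ is cyclic of order $q+1$, which is \emph{even}; so the map is not injective on $2$-power torsion, and you cannot test ``fourth power'' in the maximal order. The same problem arises in cases~(\ref{T54}), (\ref{T55}), (\ref{T56}). Only in case~(\ref{T53}) does your transfer idea go through, because there $(\Delta_0/2)=+1$ makes $h(-4m)=h(-m)$ and the map is an isomorphism --- and this is precisely how the paper handles that one case.

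For the remaining cases the paper does something much more elementary, which you are missing: it stays entirely in $\Cl(\Delta)$ and instead performs a direct congruence descent on the \emph{equation} $px^4 - my^4 = z^2$ to reduce any nontrivial solution to one with $\gcd(x,f)=1$. For example, in case~(\ref{T52}) with $m=q^2n$: if $q\mid x$ then $q\mid z$, and reducing $pq^2X^4 - ny^4 = Z^2$ modulo $q$ forces $(\frac{-n}{q})=+1$, contradicting the hypothesis. In case~(\ref{T54}) with $m=4n$, $n\equiv 1\bmod 4$: if $2\mid x$ then $z^2+ny^4\equiv 0\bmod 4$ forces $y,z$ even, and one descends. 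Once $\gcd(x,f)=1$, the square-root extraction in Theorem~\ref{T2} is legitimate inside $\Cl(\Delta)$ itself, and no transfer between orders is needed. So the hypotheses~(\ref{T52})--(\ref{T56}) are not conditions on the class-group comparison map; they are congruence conditions ensuring that solutions with $\gcd(x,f)>1$ can be descended away.

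On the solvability half: your argument via composing a represented square with the represented prime is valid but more elaborate than necessary. Since $[Q]$ lies in the principal genus, the principal form $(1,0,m)$ represents $p$ \emph{rationally}, i.e.\ $pt^2 = r^2 + ms^2$ for some integers; multiplying through by $t^2$ gives $p t^4 - m(st)^2 = (rt)^2$ directly.
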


\medskip\noindent{\bf Remark 1.}
In \cite{LPep} I have claimed that some proofs can be generalized
to show the unsolvability of equations of the form $px^4 - my^2 = z^2$.
This is not correct: I have overlooked the possibility that 
$\gcd(y,z) \ne 1$ in the proofs given there. In fact, consider the 
equation $71x^4 -14y^2 = z^2$. We find $71 \cdot 3^2 = 5^4 + 14$, 
giving rise to a solution $(x,y,z) = (3,3,75)$ of 
$71 \cdot x^4 = z^2 + 14 y^2$. 
\medskip

\medskip\noindent{\bf Remark 2.}
Studying a few of P\'epin's examples quickly leads to the conjecture
that Theorem \ref{TPepG} holds without any conditions on the discriminant.
This is not true: three of P\'epins ``theorems'' are actually incorrect:
\begin{enumerate}
\item $m = 100$: here $Q = (4,0,25)$ represents the prime $41 = Q(2,1)$, 
      and the equation $41x^4 - 100y^4 = z^2$ has the solution $(5,2,155)$.
\item $m = 150$: here $Q = (6,0,25)$ represents the prime $31 = Q(1,1)$, 
      and the equation $31x^4 - 150y^4 = z^2$ has the solution
      $(5,3,85)$.
\item $m = 289$: here $Q = (13,12,25)$ represents the prime 
      $p = Q(2,-1) = 53$, and the equation $px^4 - 289y^4 = z^2$ has the 
      solution  $(x,y,z) = (17,11,442)$.
\end{enumerate}
Note that, in these examples, the solutions do not satisfy
the condition $\gcd(x,f) = 1$ of Prop. \ref{Pmain} below.

This shows that we have to be careful when trying to generalize 
Thm. \ref{TPepG} to arbitrary discriminants, and that some sort 
of condition (like those in (1) -- (6)) is necessary.

\medskip\noindent{\bf Remark 3.}
The obvious generalization of P\'epin's theorems to nonprime values 
of $p$ does not hold: the form $Q = (2,0,7)$ represents $15 = Q(2,1)$, 
but the diophantine equation $15x^4 - 14y^4 = z^2$ has the nontrivial 
(but obvious) solution $(1,1,1)$.

As in the proof below, we can deduce that $15^2$ is represented by 
the form $Q$; it does not follow, however, that the square roots 
$(3,\pm 2, 5)$ of $(2,0,7)$ represent $15$: in fact, we have 
$15^2 = Q(9,3)$, so the representation is imprimitive, and $Q$ is 
not equivalent to a form with first coefficient $15^2$.
\medskip

\medskip\noindent{\bf Remark 4.}
Some of the examples given by P\'epin are special cases of others.
Consider e.g. the case $m = 80 = 4 \cdot 20$; the derived form of 
$\ov{Q} = (9,-8,4) \sim (4,0,5)$ with discriminant $-4 \cdot 20$ is the
form $Q = (9,-16,16) \sim (9,2,9)$ with discriminant $\Delta = -4m$; 
its class is easily shown to be a square but not a fourth power, but
this is not needed here since every prime represented by $Q$ is also 
represented by $\ov{Q}$, which means that the result corresponding to 
$m = 80$ is a special case of the result for $m = 20$. 

The same thing happens for $m = 68$, $126$, $128$, \ldots; the 
corresponding entries in the tables above are indicated by the 
comment $m = 4 \cdot n$.

The case $m = -56 = -14 \cdot 4$ is an exception: the derived form 
of $(7,0,2) \sim (2,0,7)$ is $Q = (7,0,8)$, whose class is a square but
not a fourth power in $\Cl^+(\Delta)$. Moreover, the primes that $Q$ 
represents are also represented by $(2,0,7)$. 

In addition we have the form $(3,2,19) \sim (3,-4,20)$, and the latter 
form is derived from $(3,-2,5)$. This form generates $\Cl(-4 \cdot 14)$, 
and the square of $(3,-4,20)$ is equivalent to $(8,8,9)$, which underives 
to $(2,4,9) \sim (2,0,7)$. The composition of $(8,8,9)$ and $(7,0,8)$ 
produces a form equivalent to $(4,4,15)$, which is not a square but 
represents $4$.

Observe that the primes $p$ represented by $(4,4,15)$ are congruent
to $3 \bmod 4$, hence the equations $px^2 - 56y^2 = z^2$ only have
solutions with $2 \mid x$ (thus $23x^2 - 56y^2 = z^2$, for example,
has the solution $(x,y,z) = (2,1,6)$). This implies that the 
corresponding quartic $23x^4 - 56y^4 = z^2$ does not have a $2$-adic
solution in a trivial way: $2 \mid x$ implies $4 \mid z$ and $2 \mid y$,
so a simple descent shows that this equation does not have a nontrivial
solution.

\medskip\noindent{\bf Remark 5.}
P\'epin's calculations for $m = 114$ are incorrect: here, the square 
class in $\Cl^+(-4m)$ is generated by $(6,0,19)$, whereas P\'epin uses 
$(2,0,57)$. Perhaps P\'epin went through the forms $(2,0,n)$
with $n \equiv \pm 1 \bmod 8$; these forms are contained in 
square classes if $n$ is prime, or if $n$ has only prime factors
$\equiv \pm 1 \bmod 8$. If the class number $h(-8n)$ is not
divisible by $8$, the class is not a fourth power.

A direct test whether forms $(2,0,p)$ for $p \equiv \pm 1 \bmod 8$ are
fourth powers can be performed as follows: write $p = e^2 - 2f^2$;
then $Q = (2,0,p)$ represents $e^2 = 2f^2 + p$, and $[Q]$ is a fourth 
power if and only if a form with first coefficient $e > 0$ is in the 
principal genus. If $p \equiv 7 \bmod 8$, this happens if and only if
$(\frac{2}e) = +1$, and if $p \equiv 1 \bmod 8$, if and only if 
$(\frac{-2}e) = +1$. A simple exercise using congruences modulo $16$ 
and quadratic reciprocity shows that $(\frac{2}e) = (-1)^{(p+1)/8}$
in the first, and $(\frac{-2}e) = (\frac 2p)^{\phantom{p}}_4$ in the
second case.

$$ \begin{array}{r|rrc}
     p & e & f &  (-1)^{(p+1)/8} \\ \hline
     7 & 3 & 1 &    -1 \\
    23 & 5 & 1 &    -1 \\
    31 & 7 & 3 &    +1 \\
    47 & 7 & 1 &    +1 \end{array} \qquad \qquad  
\begin{array}{r|rrc}
     p &  e & f & (2/p)_4) \\ \hline
    17 &  5 & 2 &    -1 \\
    41 &  7 & 2 &    -1 \\
    73 &  9 & 2 &    +1 \\
    89 & 11 & 4 &    +1  \end{array} $$ 
\medskip       

Thus from Theorem \ref{TPepG} we get the following

\begin{cor}
Let $m = 2q$, where $q \equiv 7 \bmod 16$ is a prime.
For any prime $p$ represented by $(2,0,q)$, the 
diophantine equation $px^4 - my^4 = z^2$ has only the
trivial solution. The same result holds for primes 
$q \equiv 1 \bmod 8$ with $(2/p)_4 = -1$.
\end{cor}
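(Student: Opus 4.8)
The plan is to deduce this Corollary from Theorem~\ref{TPepG} by verifying, for each of the two families of $m$, that the form $Q = (2,0,q)$ has discriminant $\Delta = -4m = -8q$, that $[Q]$ is a square but not a fourth power in $\Cl(\Delta)$, and that the relevant case of Theorem~\ref{TPepG} applies. Since $q$ is prime and odd, $\Delta = -8q$ is a fundamental discriminant, so case~(\ref{T51}) of Theorem~\ref{TPepG} is what we need; thus once the class-group condition on $[Q]$ is checked, the unsolvability of $px^4 - my^4 = z^2$ for any prime $p \nmid \Delta$ represented by $Q$ follows immediately. So the entire content of the proof is the assertion that $[(2,0,q)]$ is a square but not a fourth power, and this is exactly what the paragraph preceding the Corollary establishes.

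First I would record that $Q = (2,0,q)$ represents a square: writing $q = e^2 - 2f^2$ (possible because $q \equiv \pm 1 \bmod 8$ means $2$ is a quadratic residue mod $q$, hence $q$ is represented by the principal form $(1,0,-2)$ of discriminant $8$, i.e. $q + 2f^2 = e^2$ for some integers $e,f$), we get $Q(e,f) = 2f^2 + q = e^2$. Since $\gcd(e, \Delta) = 1$ (any odd prime dividing $e$ would divide $q = e^2 - 2f^2$, forcing $q \mid f$ and then $q^2 \mid q$, absurd; and $e$ is odd because $q$ is odd), Theorem~\ref{T2} applies: $[Q] = [Q_1]^2$ where $Q_1$ is a primitive form representing $e$, so $[Q]$ is a square. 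By genus theory $[Q_1]$ — and hence $[Q]$ — is a fourth power if and only if $[Q_1]$ lies in the principal genus, which by Corollary~\ref{Cgen} (applied to $Q_1$ and the principal form) happens if and only if a form with first coefficient $e>0$ is in the principal genus.

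Next I would evaluate the genus character. For $\Delta = -8q$ the principal genus is cut out by two quadratic characters, and the relevant one applied to the value $e$ reduces (using that $e^2 - q = 2f^2$, so $e^2 \equiv 2f^2 \bmod q$ and $-q \equiv -e^2 \bmod 2$-power considerations) to the symbol $(\tfrac{2}{e})$ when $q \equiv 7 \bmod 8$ and to $(\tfrac{-2}{e})$ when $q \equiv 1 \bmod 8$. The stated congruence-plus-reciprocity computation — done modulo $16$ — then gives $(\tfrac{2}{e}) = (-1)^{(q+1)/8}$ in the first case and $(\tfrac{-2}{e}) = \sym{2}{q}$ in the second. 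Hence $[Q]$ is \emph{not} a fourth power precisely when $(-1)^{(q+1)/8} = -1$, i.e. $q \equiv 7 \bmod 16$ (giving the first family), or when $\sym{2}{q} = -1$ (giving the second family). In either case the hypotheses of Theorem~\ref{TPepG}(\ref{T51}) are met, and the Corollary follows.

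The main obstacle is the genus-character identification in the last paragraph: one has to pin down exactly which of the two characters cutting out the principal genus of $\Cl(-8q)$ detects a fourth power, express its value at $e$ in terms of Legendre symbols with $e$ in the ``numerator'' position, and then carry out the reciprocity/mod-$16$ bookkeeping cleanly. This is routine but error-prone, and it is the step where the split between $q \equiv 7 \bmod 16$ and the $\sym{2}{q}=-1$ condition actually appears; everything else is a direct citation of Theorems~\ref{T2} and \ref{TPepG} and Corollary~\ref{Cgen}.
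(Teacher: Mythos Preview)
Your proposal is correct and follows essentially the same route as the paper: the Corollary is deduced from Theorem~\ref{TPepG}(\ref{T51}) once one checks, via the representation $q = e^2 - 2f^2$ and Theorem~\ref{T2}, that $[(2,0,q)]$ is a square whose fourth-power status is governed by $(\tfrac{2}{e})$ or $(\tfrac{-2}{e})$, exactly as worked out in Remark~5. One small slip: you write $Q(e,f) = 2f^2 + q$, but $Q = (2,0,q)$ gives $Q(x,y) = 2x^2 + qy^2$, so the correct evaluation is $Q(f,1) = 2f^2 + q = e^2$.
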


It is an easy exercise to deduce countless other families
of similar results from Theorem \ref{TPepG}.

\subsection*{The Proof of Theorem \ref{TPepG}}

The part of Thm. \ref{TPepG} which is easiest to prove generalizes 
Remark 1:

\begin{lem}
If $p$ is represented by a form $Q$ in the principal genus of 
$\Cl^+(-4m)$, then the equation $px^4 - my^2 = z^2$ has a nonzero 
integral solution.
\end{lem}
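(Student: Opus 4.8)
The plan is to exploit the fact that $Q$ lies in the principal genus, so by Theorem~\ref{T2} (the equivalence $ii) \Leftrightarrow i')$) there is a primitive form $Q_1$ with $[Q]=[Q_1]^2$; here the discriminant is $\Delta = -4m$, so $\Delta_0$ divides $-4m$ and $f^2 \mid m$. Since $p \nmid \Delta$ is represented by $Q$, Corollary~\ref{Cgen} and the composition formula~(\ref{EGcom}) let us transfer this to a statement about the square root: applying Gauss composition to $Q_1 \cdot Q_1 = Q$, a representation $p = Q(a,b)$ pulls back (after adjusting $Q_1$ within its class, using Theorem~1 to get first coefficient dividing $p^2$, and then Corollary~\ref{C1} to ensure primitivity) to a representation of $p$, or of $pk^2$ for some $k$, by $Q_1$. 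What I actually want is slightly weaker and more robust: since $Q$ represents $p$, the form $Q$ represents $p$, and I will directly produce the quartic solution from a representation of a \emph{square} times $p$ by $Q$ together with the square-root form.

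Concretely, I would argue as follows. Because $[Q]$ is a square in $\Cl^+(-4m)$, genus theory says $Q$ represents the same square classes as the principal form; in particular, combining $Q(a,b)=p$ with the principal form representing $1$ and using~(\ref{EGcom}), the class $[Q]$ represents $p \cdot 1 = p$ — but more usefully, choosing the square root $Q_1$ with $[Q_1]^2=[Q]$, there are integers $r,s$ with $Q_1(r,s) = \pm p$ or, after clearing a possible obstruction to primitivity as in Corollary~\ref{C1}, with $Q_1(r,s)\,Q_1(r',s') $ realizing $p$ through the bilinear forms~(\ref{Ebf1})--(\ref{Ebf2}). Writing this out, one obtains an identity of the shape $p \cdot X^2 = Q(u,v)$ with $X$ built from $r,s$; since $Q$ has discriminant $-4m$, after a change of variables $Q$ is equivalent to $(p, B, C)$ for a representation with first coefficient $p$, and then $p\mid B^2+4mC^{\prime}$ forces the shape $p\cdot(\text{square}) = (\text{something})^2 + m\cdot(\text{something})^2$. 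Squaring and collecting, this yields $p x^4 - m y^2 = z^2$ with $(x,y,z)\neq 0$: explicitly, from $p = r^2 + m s^2 / (\text{unit})$-type relations one sets $x$ from the square-root data and reads off $y,z$.

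The cleanest route, and the one I would present, is: take $Q(a,b) = p$; then $Q$ represents $p$, and since $[Q]$ is in the principal genus, Theorem~\ref{T2}\,$i')$ applied after composing shows $Q$ represents $p^2$ — indeed $p^2 = Q(a',b')$ by composing the representation $p=Q(a,b)$ with itself via a Bhargava cube whose forms are $Q,Q,Q^{-}$ is not available, but $[Q][Q] = [Q]^2$ is a fourth power only sometimes; instead I use that $[Q]=[Q_1]^2$ and that $Q_1$ represents \emph{some} integer $n$ coprime to $\Delta$, so by~(\ref{EGcom}) $Q$ represents $n^2$, while also representing $p$; composing again, $Q$ represents $pn^2$, hence there are $u,v$ coprime-up-to-content with $Q(u,v) = p n^2$. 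Now bring $Q$ to the form $(p, B, C)$ realizing $Q(a,b)=p$: writing $Q(u,v) = p n^2$ in these coordinates and completing the square modulo $p$ gives $4p\cdot p n^2 = (2pU + BV)^2 + 4m V^2$, i.e. $p\cdot(2n)^2 = (2pU+BV)^2/p + \cdots$ — after one more clearing of $p$ this is exactly $p\,x^4 - m\,y^2 = z^2$ upon setting $x^2 = $ (the square-root quantity from $Q_1$), $y = 2V$, $z = (2pU+BV)/\sqrt p$-adjusted.

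The main obstacle is precisely the primitivity/content bookkeeping: Gauss composition~(\ref{EGcom}) only gives that $Q_3$ represents a product, not that the representation is primitive, and to get a genuine \emph{quartic} (a fourth power $x^4$, not merely $x^2$) I must arrange that the $x$-variable coming from the square-root form $Q_1$ is itself a square — which is what forces $[Q]$ to be a square (so that the root $Q_1$ exists) and is where Theorem~\ref{T2}'s last sentence ("if $Q$ represents $m^2$ primitively, then $Q_1$ can be chosen to represent $m$ primitively") does the real work. Everything else — completing the square mod $p$, tracking signs of $g\in\{\pm1\}$ via Corollary~\ref{C1}, and checking the solution is nonzero — is routine and I would not grind through it here.
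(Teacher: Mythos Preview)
Your proposal never actually produces a proof, and it misses the key simplification that makes this lemma a two-line argument. Note carefully that the equation is $px^4 - my^2 = z^2$, with $y$ appearing only to the \emph{second} power. So there is no need to manufacture a genuine fourth power via square-root forms $Q_1$: it suffices to find integers $x_0,y_0,z_0$ with $px_0^2 - my_0^2 = z_0^2$, since then $(x,y,z) = (x_0,\,x_0y_0,\,x_0z_0)$ solves $px^4 - my^2 = z^2$.

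The paper dispatches this immediately. By genus theory, two forms in the same genus represent the same integers \emph{rationally} (equivalently, by Theorem~\ref{T2}\,$iii)$, one form represents $1$ rationally iff it is a square class, and composing with $[Q]$ shows any form in the principal genus represents the same rationals as $Q_0$). Since $Q$ lies in the principal genus and represents $p$, the principal form $Q_0 = (1,0,m)$ represents $p$ rationally; clearing denominators gives integers with $px_0^2 = z_0^2 + my_0^2$, and multiplying by $x_0^2$ finishes. Your attempts to pull back the representation of $p$ to a square-root form $Q_1$, track primitivity via Corollary~\ref{C1}, and complete the square modulo $p$ are all unnecessary detours; moreover, each of your three sketched routes trails off at precisely the point where an actual identity would need to be written down, so as it stands none of them constitutes a proof.
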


\begin{proof}
The solvability of $px^2 = z^2 + my^2$ follows from genus theory:
the prime $p$ is represented by a form in the principal genus,
hence any form in the principal genus (in particular the principal
form $(1,0,-m)$) represents $p$ rationally. Multiplying through by 
$x^2$ we find $px^4 - m(xy)^2 = (xz)^2$.
\end{proof}

The main idea behind the proof of Thm. \ref{TPepG} is the content 
of the following

\begin{prop}\label{Pmain}
Assume that $Q$ is a form with discriminant $\Delta = -4m$ whose 
class is a square but not a fourth power in $\Cl^+(\Delta)$. Let 
$p \nmid \Delta$ be a prime represented by $Q$, and write 
$\Delta = \Delta_0 f^2$, where $\Delta_0$ is a fundamental 
discriminant. Then the diophantine equation $px^4 - my^4 = z^2$ 
does not have an integral solution $(x,y,z)$ with $\gcd(x,f) = 1$.
\end{prop}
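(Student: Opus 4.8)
The plan is to assume a nontrivial solution exists and then bootstrap two successive ``square-root extractions'' in the class group $\Cl^+(\Delta)$ — which equals $\Cl(\Delta)$, as $\Delta = -4m < 0$ — until $[Q]$ is forced to be a fourth power, contradicting the hypothesis. So suppose $(x,y,z)$ is an integral solution of $px^4 - my^4 = z^2$ with $xyz \ne 0$ and $\gcd(x,f) = 1$. A one-line descent reduces to the case $\gcd(x,y) = 1$: if $d = \gcd(x,y)$ then $d^4 \mid z^2$, so $d^2 \mid z$, and dividing the equation through by $d^4$ preserves both $xyz \ne 0$ and $\gcd(x,f) = 1$. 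From $\gcd(x,y) = 1$ together with $p \nmid m$ (which holds because $p \nmid \Delta$) I would check that $\gcd(z,y) = 1$: a prime dividing both $z$ and $y$ would divide $px^4$ to the second power and hence divide $x$. Thus, writing $Y = y^2$, the identity $z^2 + mY^2 = px^4$ is a \emph{primitive} representation of $px^4$ by the principal form $Q_0 = (1,0,m)$ of discriminant $\Delta$.

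Next I extract the first square root. The prime $p$ is represented \emph{primitively} by $Q$ (a prime can only be represented primitively). Apply Corollary \ref{C1} with $Q_1 = Q$, $Q_2 = Q_0$ and $r = x^4$; since $[Q_0]$ is the identity, for either sign $g$ the class $[Q_1][Q_2]^g = [Q]$ primitively represents $p^2 x^4 = (px^2)^2$. This square is coprime to $f$ (because $p \nmid f$ and $\gcd(x,f) = 1$), so Theorem \ref{T2} — crucially its ``moreover'' clause — produces a primitive form $Q_1'$ with $[Q] = [Q_1']^2$ that primitively represents $px^2$ (take the positive square root, so $Q_1'$ stays positive definite).

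Now the second square root and the conclusion. Apply Corollary \ref{C1} once more, with $Q$ (representing $p$ primitively), $Q_1'$ (representing $p \cdot x^2$ primitively) and $r = x^2$: there is $g' \in \{\pm 1\}$ such that $(px)^2$ is primitively represented by any form in the class $[Q][Q_1']^{g'} = [Q_1']^{2+g'}$. This square is again coprime to $f$, so Theorem \ref{T2} forces $[Q_1']^{2+g'}$ to be a square in $\Cl^+(\Delta)$. If $g' = -1$ this says $[Q_1']$ is a square; if $g' = +1$ it says $[Q_1']^3$ is a square, and in an abelian group $a^3 = b^2$ gives $a = (ba^{-1})^2$, so $[Q_1']$ is a square in that case too. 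Either way $[Q] = [Q_1']^2$ is a fourth power — the contradiction we wanted.

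The only real obstacle is bookkeeping: one must track the \emph{primitivity} of every representation along the way. This is exactly where the hypotheses $\gcd(x,f) = 1$ and (after the reduction) $\gcd(x,y) = 1$ enter, and it is why the sharpened ``moreover'' part of Theorem \ref{T2} — which records that the extracted square root still represents the prescribed integer — is indispensable: without it, the data needed to set up the second application of Corollary \ref{C1} would be lost.
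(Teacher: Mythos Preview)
Your proof is correct and follows the paper's argument through the first square-root extraction: both show that $Q_0$ primitively represents $px^4$, compose with $Q$ via Corollary~\ref{C1} to get $[Q]$ primitively representing $(px^2)^2$, and then invoke Theorem~\ref{T2} to obtain $Q_1'$ with $[Q_1']^2=[Q]$ representing $px^2$.

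The only divergence is in the final step. The paper finishes in one line using Corollary~\ref{Cgen}: since $Q$ represents $p\cdot 1^2$ and $Q_1'$ represents $p\cdot x^2$ with $\gcd(x,f)=1$, the two forms lie in the same genus, hence $[Q_1']$ is a square and $[Q]$ a fourth power. You instead iterate the same machinery---a second application of Corollary~\ref{C1} produces a primitive representation of $(px)^2$ by $[Q_1']^{2\pm 1}$, and a second appeal to Theorem~\ref{T2} plus the observation that $a^3=b^2\Rightarrow a=(ba^{-1})^2$ forces $[Q_1']$ to be a square. This works, but note that it is precisely the proof of Corollary~\ref{Cgen} specialized to the case at hand; you have effectively re-derived that corollary rather than quoted it. The paper's route is shorter, while yours has the merit of using only the two tools already on the table.
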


\begin{proof}
The form $Q_0 = (1,0,m)$ represents $px^4 = Q_0(z,y^2)$, and $Q$ 
represents $p$. By Cor. \ref{C1}, the square $p^2x^4$ is represented 
primitively by any form in the class $[Q][Q_0]^e = [Q]$ (for some 
$e = \pm 1$), hence by $Q$ itself. Thus there is a form $Q_1$ with 
$Q_1^2 \sim Q$ representing $px^2$, which by Cor. \ref{Cgen} is in 
the same genus as $Q$. But then $Q_1$ and $Q$ differ by a square
class, which implies that $[Q_1]$ is a square and that $[Q]$
is a fourth power: contradiction.
\end{proof}

Now we can give the 
\begin{proof}[Proof of Thm. \ref{TPepG}]
We have to show that if $px^4 - my^4 = z^2$ has a nontrivial integral
solution, then there is a solution satisfying $\gcd(x,f) = 1$. Applying
Prop. \ref{Pmain} then gives the desired result.

\subsection*{Case (1)} If $\Delta$ is fundamental, then $f = 1$,
and the conditiong $\gcd(x,f) = 1$ is trivially satisfied.
\subsection*{Case (2)} 
Write $\Delta = 4m$ and $m = q^2n$; then $\Delta_0 = 4n$.
Assume that $px^4 - my^4 = z^2$. If $q \mid x$ and $q \nmid y$, 
then $x = qX$ and $z = qZ$ gives $pq^2X^4 - ny^4 = Z^2$. 
Reduction modulo $q$ implies $(\frac{-n}q) = +1$ contradicting 
our assumptions since $q$ is odd.
\subsection*{Case (3)} If $\Delta = 4 \Delta_0$ with 
$\Delta_0 \equiv 1 \bmod 8$, then we cannot exclude the possibility
that $x$ might be even. Thus in order to guarantee the applicability 
of Prop. \ref{Pmain} we have to get rid of the factor $4$ in 
$\Delta = -4m$. This is achieved in the following way.

By (\ref{ELip}), we have $h(-4m) = (2 - (\frac{-m}2)) h(-m) = h(m)$
if $-m \equiv 1 \bmod 8$. Since there is a natural surjective projection
$\Cl(-4m) \lra \Cl(-m)$, this implies that $\Cl(-4m) \simeq \Cl(-m)$. 
But then the form $Q$ projects to a form $\ov{Q}$ with discriminant $-m$
whose class is a square but not a fourth power. Moreover, every integer
represented by $Q$ is also represented by $\ov{Q}$. An application of 
Prop. \ref{Pmain} with $\Cl^+(\Delta)$ replaced by $\Cl^+(\Delta_0)$
now gives the desired result.

\subsection*{Case (4)} Here $m = 4n$ for $n \equiv 1 \bmod 4$. 
If $px^4 - 4ny^4 = z^2$ and $x = 2X$, then $z = 2z_1$ and 
$4pX^4 - ny^4 = z_1^2$. From 
$0 \equiv z_1^2 + ny^4 \equiv z_1^2 + y^4 \bmod 4$ we find that 
$z_1 = 2Z$ and $y = 2Y$ must be even, hence $pX^4 - 4nY^4 = Z^2$.
Repeating this if necessary we find a solution with $x$ odd.

\subsection*{Case (5)} Here $m = 16n$ with $n \equiv 2 \bmod 4$.
If $px^4 - 16ny^4 = z^2$, then $x = 2X$ and $z = 4Z$, hence
$pX^4 - nY^4 = Z^2$. 

Since $p$ is represented by a form in the principal genus,
we must have $pa^2 = b^2 + mc^2$ for some odd integer $a$.
This implies $p \equiv 1+mc^2 \equiv 1 \bmod 8$.

From $1 - 2Y^4 \equiv 1 \bmod 4$ we deduce that $Y = 2y_1$,
hence $pX^4 - 16ny_1^4 = Z^2$. This proves our claim.

\subsection*{Case (6)} Assume that $m = 4f^2n$ with $n \equiv 1 \bmod 4$ 
and $(-n/q) = -1$ for all primes $q \mid f$, and that $px^4 - my^4 = z^2$. 
If $x = 2X$ and $z = 2z_1$, then $4pX^4 - f^2ny^4 = z_1^2$. If $y$ is odd, 
then $z_1^2 \equiv 3 \bmod 4$: contradiction. Thus $y = 2Y$ and $z = 2Z$, 
hence $pX^4 - 4f^2nY^4 = Z^2$.

If $f = qg$ and $x = qX$, $z = qz_1$, then $pq^2X^4 - 4g^2ny^4 = z_1^2$.
Reduction mod $q$ gives $(-n/q) = +1$ contradicting our assumption,
except when $q \mid y$. But then $y = qY$ and $z_1 = qZ$, and we find 
$pX^4 - mY^4 = Z^2$.

\end{proof}

The abstract argument in case (3) above can be made perfectly explicit:

\begin{lem}
Assume that $m \equiv 3 \bmod 4$, and that $Q = (A,B,C)$ is a   
form  with discriminant $B^2 - 4AC = -4m$. We either have
\begin{enumerate}
\item $4 \mid B$;
\item $B \equiv 2 \bmod 4$ and $4 \mid C$; or
\item $B \equiv 2 \bmod 4$ and $4 \mid A$.
\end{enumerate}
Then every integer represented by $Q$ is also represented by the form 
$$ Q' = \begin{cases}
    (A, A+\frac{B}2,\frac{A+B+C}4)  & \text{ in cases (1) and (2)} \\
    (\frac{A}4,\frac{B}2,C)         & \text{ in case (3)}
        \end{cases} $$          
with discriminant $-m$. 
\end{lem}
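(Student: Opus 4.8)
The plan is to reduce everything to the notion of a \emph{derived form} from the discussion of nonfundamental discriminants above. If $\ov{R} = (a,b,c)$ is a form of discriminant $-m$, then $(a,2b,4c)$ has discriminant $-4m$ and $(a,2b,4c)(x,y) = \ov{R}(x,2y)$, so every integer represented by $(a,2b,4c)$ is also represented by $\ov{R}$. Since $\SL_2(\Z)$-equivalent forms represent the same integers, it suffices in each case to produce an $S \in \SL_2(\Z)$ for which $Q|_S$ has the derived shape $(a,2b,4c)$, and then to identify the underlying form $(a,b,c)$ --- possibly after one further $\SL_2(\Z)$-equivalence --- with the form $Q'$ in the statement. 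The discriminant of $Q'$ comes out as $\tfrac14(B^2-4AC) = -m$ by a one-line computation in every case.

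First I would record the $2$-adic arithmetic of the coefficients of a primitive $Q$. Write $B = 2b$; then $AC = b^2 + m$. If $b$ is even, then $AC \equiv m \equiv 3 \bmod 4$, which forces $A$ and $C$ to be odd with $A \not\equiv C \bmod 4$, whence $A + C \equiv 0$ and $A + B + C \equiv 0 \bmod 4$; this is case (1). If $b$ is odd, then $AC \equiv b^2 + m \equiv 0 \bmod 4$, and since a primitive form with $B$ even cannot have both $A$ and $C$ even, exactly one of $A, C$ is divisible by $4$ and the other is odd. These are cases (2) (the divisible one is $C$) and (3) (it is $A$), so the three alternatives are exhaustive and mutually exclusive.

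Then I would dispatch the cases. In case (2) there is nothing to do: $Q(x,y) = (A, \tfrac B2, \tfrac C4)(x, 2y)$, so $Q$ is already the derived form of $Q' = (A, \tfrac B2, \tfrac C4)$. In case (1), apply $S = \smatr{1}{1}{0}{1}$, which produces the parallel form $Q|_S = (A,\, 2A+B,\, A+B+C)$; by the congruences just established this is $\bigl(A,\, 2(A+\tfrac B2),\, 4\cdot\tfrac{A+B+C}{4}\bigr)$, the derived form of $Q' = (A,\, A+\tfrac B2,\, \tfrac{A+B+C}{4})$. In case (3), apply the swap $S = \smatr{0}{-1}{1}{0}$, giving $Q|_S = (C, -B, A)$; since $4 \mid A$ this is the derived form of $(C, -\tfrac B2, \tfrac A4)$, and a second application of the same swap turns the latter into $Q' = (\tfrac A4, \tfrac B2, C)$. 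In each case the chain $Q \sim Q|_S = (\text{derived form of } Q')$ gives the asserted inclusion of represented integers, and $\disc Q' = -m$ is immediate.

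I expect the only delicate point to be the bookkeeping in the middle step: checking that the last coefficient really is divisible by $4$ after the substitution in case (1), and that the three listed alternatives really are exhaustive for a primitive form with $m \equiv 3 \bmod 4$. Both reduce to the single identity $AC = (B/2)^2 + m$ together with congruences mod $8$, so there is no real obstacle beyond tracking the $2$-adic valuations of $A, B, C$. (One caveat I would flag: in case (2) one should use the form $Q' = (A,\tfrac B2,\tfrac C4)$ exhibited above; the expression $(A, A+\tfrac B2, \tfrac{A+B+C}{4})$ displayed jointly for cases (1) and (2) is really the answer for case (1) only --- it need not even have integral coefficients when $4\mid C$.)
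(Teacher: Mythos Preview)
Your approach is essentially the paper's: the paper substitutes $x = X + \tfrac12 Y$, $y = \tfrac12 Y$ in cases (1)--(2) and $x = \tfrac12 X$, $y = Y$ in case (3), which is exactly your parallel shift followed by ``underivation'' in case (1), and your swap--underive--swap in case (3). The packaging via derived forms is a bit more conceptual, but the computations are identical.

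Your caveat at the end is well taken and in fact catches an error in the paper. In case (2) one has $A$ odd (by primitivity, since $B$ and $C$ are even), $B \equiv 2$ and $C \equiv 0 \bmod 4$, so $A+B+C$ is odd and $(A+B+C)/4 \notin \Z$; for instance $m = 39$, $Q = (5,2,8)$ gives $(A+B+C)/4 = 15/4$. The paper's proof asserts ``the coefficients of $Q'$ in these two cases are clearly integers,'' which holds only in case (1). Your replacement $Q' = (A, \tfrac B2, \tfrac C4)$ for case (2) --- where $Q$ is already a derived form --- is the correct fix.
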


\begin{proof}
From $B^2 - 4AC = -4m$ we get $(\frac B2)^2 - AC = -m \equiv 1 \bmod 4$.

If $4 \mid B$, then $AC \equiv 3 \bmod 4$, which implies 
$A + C \equiv 0 \bmod 4$. Thus in this case, $\frac B2$ and 
$\frac{A+B+C}4$ are integers.

If $2 \parallel B$, then $1-AC \equiv 1 \bmod 4$ implies that $4 \mid AC$.
Since the form $Q$ is primitive, we either have $4 \mid A$ or $4 \mid C$.

In cases (1) and (2), substitute $x = X + \frac 12 Y$ and $y = \frac12 Y$
in $Q(x,y) = Ax^2 + Bxy + Cy^2$ and observe that if $x$ and $y$ are 
integers, then so are $X$ and $Y$. We find
$Q(x,y) = A(X + \frac 12 Y)^2 + B(X + \frac 12 Y)(\frac 12 Y) 
          + C(\frac 12 Y)^2 = AX^2 + (A + \frac B2)XY + \frac{A+B+C}4 Y^2$.
The coefficients of $Q'$ in these two cases are clearly integers.
In case (3), set $x = \frac12 X$ and $y = Y$.
\end{proof}

\medskip\noindent
{\bf Remark.} The proof we have given works with forms in 
$\Cl(\Delta)^2$; for distinguishing simple squares from 
fourth powers, one can introduce characters on  
$\Cl(\Delta)^2/\Cl(\Delta)^4$. This was first done in 
a special case by Dirichlet, who showed how to express 
these characters via quartic residue symbols; much later, his
results were generalized within the theory of spinor genera by 
Estes \& Pall \cite{EP}. This explains why most proofs of the 
nonsolvability of equations of the form $ax^4 + by^4 = z^2$
(see in particular \cite{LFC}) use quartic residue symbols.

\section{Examples of Lind-Reichardt type}

The most famous counterexample to the Hasse principle is due to 
Lind \cite{Lind} and Reichardt \cite{Reich}; Reichardt showed that the 
equation $17x^4 - 2y^2 = z^4$ has solutions in every localization
of $\Q$, but no rational solutions except the trivial $(0,0,0)$,
and Lind constructed many families of similar examples. Below,
we will show that our construction also gives some of their
examples; we will only discuss the simplest case of fundamental
discriminants and are content with the remark that there are
similar results in which $-4AC$ is not assumed to be fundamental.
 
\begin{thm}
Let $A$ and $C$ be coprime positive integers such that 
$\Delta = -4AC$ is a fundamental discriminant.
Then $Q = (A,0,C)$ is a primitive form with discriminant $\Delta$. 
If the equivalence class of $Q$ is a square but not a fourth power in 
$\Cl^+(\Delta)$, then the diophantine equation $Ax^4 - z^4 = Cy^2$ 
only has the trivial solution $(0,0,0)$.
\end{thm}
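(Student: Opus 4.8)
The plan is to reduce the statement to Proposition \ref{Pmain} (equivalently Theorem \ref{TPepG}(1)), exactly as was done for the equations $px^4 - my^4 = z^2$, by absorbing the ``coefficient'' $A$ into the role played there by the prime $p$. First I would rewrite the equation $Ax^4 - z^4 = Cy^2$ as $A x^4 = z^4 + C y^2$, and observe that this exhibits $A x^4$ as a value of the principal form $Q_0 = (1,0,AC)$ attached to the discriminant $\Delta = -4AC$: indeed $Q_0(z^2, y) = z^4 + AC y^2$ does not quite match, so instead I note that $(1,0,AC)$ represents $z^4 + ACy^2$ while our quantity is $z^4 + Cy^2$; the correct normalisation is to use the form $(C,0,A)$, which is $\GL_2$-equivalent (indeed $\SL_2$-conjugate up to swapping) to $(A,0,C) = Q$, so that $Q^- \sim Q$ precisely when $[Q]$ has order dividing $2$ — but here we do not need that. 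The cleanest route: $(C,0,A)$ represents $C y^2 + A x^4 = z^4$, i.e. it represents the fourth power $z^4$, hence the square $(z^2)^2$. But $(C,0,A)$ and $Q = (A,0,C)$ have the same discriminant $\Delta = -4AC$ and are in inverse classes (compare the inverse formula $Q^- = (A,-B,C)$ with $B=0$, so in fact $Q^- = Q$ here — thus $(A,0,C) \sim (C,0,A)$ automatically, since the class of $(A,0,C)$ equals that of $(C,0,A)$ via $S = \smatr{0}{-1}{1}{0}$). So $Q$ itself represents $z^4$.

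Next I would invoke Corollary \ref{C1}, or more directly the last clause of Theorem \ref{T2}: since $Q$ represents the square $z^4 = (z^2)^2$ — and we may assume $\gcd(x,y,z)=1$ and, after a small coprimality reduction, that $z^2$ is coprime to $\Delta$, because $\Delta$ is fundamental so Theorem \ref{T2}(i) and (i$'$) are equivalent — we conclude there is a primitive form $Q_1$ with $[Q] = [Q_1]^2$ and $Q_1$ representing $z^2$. Actually the argument I really want mirrors Proposition \ref{Pmain}: the prime $p$ there is replaced by $A$; we have $Q$ representing $A$ trivially ($Q(1,0) = A$), and $Q_0 = (1,0,AC)$ representing $A x^4$... but $A$ need not be prime, so Corollary \ref{C1} (which assumes $p$ prime) is not directly available. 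This is the point that needs care.

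The hard part will be handling the imprimitivity issue without the prime hypothesis of Corollary \ref{C1}. I expect the right move is to avoid the ``$p$ is prime'' detour entirely and argue directly from Theorem \ref{T2}: from $A x^4 = z^4 + C y^2$ together with the $\SL_2$-equivalence $Q = (A,0,C) \sim (C,0,A)$, the form $Q$ represents the square $z^4$; after dividing out by $\gcd$ and using that $\Delta = -4AC$ is fundamental, Theorem \ref{T2} ((i$'$) $\Rightarrow$ (ii), with the ``moreover'' clause) gives a primitive form $Q_1$ with $Q = Q_1^2$ and $Q_1$ representing $z^2$ primitively. Separately, $A$ is represented by $Q$, so by genus theory (Corollary \ref{Cgen}, with $f = 1$) every form representing $A$ lies in the genus of $Q$; combined with $[Q] = [Q_1]^2$ this would force $[Q_1]$ to be a square and hence $[Q]$ a fourth power — contradiction — provided the solution is nontrivial (so that not all of $x,y,z$ vanish and the representations above are genuine). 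The only genuine obstacle is checking the coprimality hypotheses needed to apply Theorem \ref{T2}: one must rule out that common factors of $z$ with $A$ or $C$ obstruct the argument, which uses $\gcd(A,C)=1$ and the fact that a fundamental $\Delta = -4AC$ forces $AC$ to be squarefree away from $2$, so any prime dividing $\gcd(z, \Delta)$ divides $f = 1$, contradiction — exactly the mechanism in the proof of Theorem \ref{T2}. I would write this out as the two-line reduction ``$Ax^4 = z^4 + Cy^2$, so $Q$ represents $z^4$; Theorem \ref{T2} and Corollary \ref{Cgen} then finish it,'' flagging the coprimality verification as the single substantive check.
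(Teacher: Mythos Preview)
Your central claim --- that $Q = (A,0,C)$ (equivalently $(C,0,A)$) represents $z^4$ --- is a sign error. Evaluating $(C,0,A)$ at $(y,x^2)$ gives $Cy^2 + Ax^4$, and from $Ax^4 - z^4 = Cy^2$ this equals $z^4 + 2Cy^2$, not $z^4$. No choice of integer arguments in the positive definite form $(A,0,C)$ produces $z^4$ from this equation; what the equation actually yields is $A(x^2)^2 - Cy^2 = z^4$, i.e.\ the \emph{indefinite} form $(A,0,-C)$ of discriminant $+4AC$ represents $z^4$. Everything you build on the false premise (the appeal to Theorem~\ref{T2}, the genus comparison via Corollary~\ref{Cgen}) therefore collapses; and in any case your invocation of Corollary~\ref{Cgen} --- matching a form representing $A$ against one representing $z^2$ --- does not fit that corollary's hypotheses, which require both forms to represent the \emph{same} integer times a square.

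The paper's own two-line proof works with exactly the form your sign slip missed: ``the form $(A,0,-C)$ represents $z^4$; since $4AC$ is fundamental, the class $[Q]$ must be a fourth power.'' (The mismatch with the stated $\Delta = -4AC$, $Q = (A,0,C)$ is a typo in the theorem; the subsequent corollary, which takes $Q = (p,0,-2)$ of discriminant $8p$, confirms the intended setting is $\Delta = 4AC$, $Q = (A,0,-C)$.) Once that is in place, the argument is a double application of Theorem~\ref{T2} and nothing more: $Q$ represents $(z^2)^2$, hence $[Q] = [Q_1]^2$ with $Q_1$ representing $z^2$; since $z^2$ is itself a square, $[Q_1]$ is again a square, and so $[Q]$ is a fourth power --- contradiction. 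There is no need to route through Proposition~\ref{Pmain}, Corollary~\ref{C1}, or any genus comparison.
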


\begin{proof}
We start with the observation that the form $(A,0,-C)$ represents $z^4$. 
Since $4AC$ is fundamental, the class $[Q]$ must be a fourth power, 
contradicting our assumptions. 
\end{proof}

The following result is very well known:

\begin{cor}
Let $p \equiv 1 \bmod 8$ be a prime with $\sym{2}{p} = -1$.
Then the diophantine equation $px^4 - 2y^2 = z^4$ does not have
a trivial solution.
\end{cor}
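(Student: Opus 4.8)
The plan is to deduce this corollary from the theorem immediately preceding it by choosing $A$ and $C$ appropriately. The equation $px^4 - 2y^2 = z^4$ rewrites (after renaming variables to match the theorem, swapping the roles of $y$ and $z$) as $px^4 - z^4 = 2y^2$, so I would set $A = p$ and $C = 2$. Then $\Delta = -4AC = -8p$, which is a fundamental discriminant precisely because $p$ is an odd prime (so $8p$ is squarefree away from the factor of $8$, and $-8p \equiv 0 \bmod 8$ is the discriminant of $\Q(\sqrt{-2p}\,)$). The form $Q = (p, 0, 2)$ is primitive since $\gcd(p,2) = 1$. By the theorem, it then suffices to verify that the class $[Q] \in \Cl^+(-8p)$ is a square but not a fourth power.

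First I would observe that $[Q]$ is a square. The form $Q^- = (p,0,-2)$ is $\GL_2(\Z)$-equivalent, but more to the point, I would instead work with $Q_1 = (2,0,p)$, which has the same discriminant; actually the cleanest route is: since $p \equiv 1 \bmod 8$, the equation $p = r^2 + 2s^2$ (or $p = e^2 - 2f^2$) is solvable — here one uses that $2$ is a quadratic residue mod $p$, so $-2p$ being representable places $Q$ or a nearby form in the principal genus. Concretely, genus theory for $\Delta = -8p$ says a form is a square in $\Cl^+(\Delta)$ iff it lies in the principal genus, which for $\Delta = -8p$ with $p \equiv 1 \bmod 8$ is detected by the characters $\big(\frac{\cdot}{p}\big)$ and the character modulo $8$; the form $(p,0,2)$ represents $p$ and $2$, and a quick character computation (using $\big(\frac{2}{p}\big) = +1$ since $p \equiv 1 \bmod 8$) shows $[Q]$ lies in the principal genus, hence is a square. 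This is essentially the content already extracted in Proposition \ref{PKap2} and the surrounding discussion of forms $(2,0,p)$.

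Next I would show $[Q]$ is not a fourth power, and this is where the hypothesis $\sym{2}{p} = -1$ enters. By Theorem \ref{T2}, $[Q]$ is a fourth power iff $Q$ represents a fourth power $m^4$ coprime to $\Delta$ (equivalently, iff some square root of $[Q]$ in the class group lies in the principal genus). Writing $p = e^2 - 2f^2$ with $e$ odd, the form $Q = (p,0,2)$ represents $e^2$, so $[Q] = [Q_1]^2$ with $Q_1$ a form representing $e$; then $[Q]$ is a fourth power iff $[Q_1]$ is a square, i.e. iff $Q_1$ lies in the principal genus, which for $\Delta = -8p$ amounts to $\big(\frac{-2}{e}\big) = +1$ (the relevant genus character evaluated at the represented integer $e$). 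The final step is the congruence-and-reciprocity identity $\big(\frac{-2}{e}\big) = \sym{2}{p}$, which is exactly the computation recorded in Remark 5 above (the case $p \equiv 1 \bmod 8$, where $\big(\frac{-2}{e}\big) = \sym{2}{p}$). Hence $[Q]$ is a fourth power iff $\sym{2}{p} = +1$; under our hypothesis $\sym{2}{p} = -1$ it is not, and the theorem applies.

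The main obstacle is the bookkeeping in the two genus-theory computations: correctly identifying the genus characters for the non-squarefree-at-$2$ discriminant $-8p$, and verifying the reciprocity identity $\big(\frac{-2}{e}\big) = \sym{2}{p}$ for the odd integer $e$ with $e^2 = p + 2f^2$. Both are elementary — the first is a standard application of Gauss's genus theory restricted to $\Delta = -8p$, and the second is the "simple exercise using congruences modulo $16$ and quadratic reciprocity" already invoked in Remark 5 — but they must be done carefully, since (as Remark 2 warns) the sign conditions are genuinely necessary. Once these are in hand, the corollary is immediate from the preceding theorem with $A = p$, $C = 2$.
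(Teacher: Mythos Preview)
Your argument is correct but takes a longer route than the paper. The paper's proof is a single line: it invokes Proposition~\ref{PKap2}, which already packages the statement that the class of $(p,0,-2)$ in $\Cl^+(8p)$ is a fourth power iff $\sym{2}{p}=+1$; under the hypothesis $\sym{2}{p}=-1$ this class is therefore a square (it lies in the principal genus) but not a fourth power, and the preceding theorem applies immediately. You instead work with the positive-definite form $(p,0,2)$ of discriminant $-8p$ and re-derive the fourth-power criterion from scratch --- writing $p=e^2-2f^2$, taking a square-root class represented by $e$, and evaluating its genus character $(-2/e)$ via the reciprocity computation of Remark~5. This is valid and yields the same criterion, but it duplicates work already recorded in Proposition~\ref{PKap2}. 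One subtlety worth noting: your choice $\Delta=-8p$, $Q=(p,0,2)$ matches the theorem's \emph{statement}, whereas the paper's own application (form $(p,0,-2)$, discriminant $8p$) matches the form appearing in the theorem's \emph{proof}; the positive-discriminant route is the one the theorem's argument actually establishes, which is why Proposition~\ref{PKap2} is the natural citation.
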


\begin{proof}
The form $Q = (p,0,-2)$ has discriminant $8p$, and Prop. \ref{PKap2}
implies that its class is a square but not a fourth power in $\Cl^+(8p)$.
\end{proof}

\section{Hasse's Local-Global Principle}

Some diophantine equations $ax^4 + by^4 = z^2$ can be proved to 
have only the trivial solution by congruences, that is, by studying 
solvability in the localizations $\Q_p$ of the rationals. A special 
case of Hasse's Local-Global Principle asserts that quadratic
equations $ax^2 + by^2 = z^2$ have nontrivial solutions
in integers (or, equivalently, in rational numbers) if and
only if it has solutions in every completion $\Q_p$.

It is quite easy to see (cf. \cite{AL}) that $ax^4 + by^4 = z^2$ 
(and therefore also $ax^2 + by^2 = z^2$) has local solutions for 
every prime $p \nmid 2ab$, so it remains to check solvability in 
the reals $\R = \Q_\infty$ and in the finitely many $\Q_p$ with 
$p \mid 2ab$.

Actually we find

\begin{prop}
If $px^2 - my^2 = z^2$ has a rational solution, then 
the quartic $px^4 - my^4 = z^2$ has a solution in 
$\Z_q$ for every prime $q > 2$.
\end{prop}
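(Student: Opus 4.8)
The plan is to avoid genus theory entirely and reduce the statement to a short, completely elementary case analysis on $q$. Given a rational solution of $px^2-my^2=z^2$, I would first clear denominators and cancel common factors, fixing a \emph{primitive} integral solution $(a,b,c)$, so that $pa^2-mb^2=c^2$ with $\gcd(a,b,c)=1$. Then, for a fixed odd prime $q$, I want to exhibit one nonzero point of $px^4-my^4=z^2$ in $\Z_q$, splitting according to whether $q$ divides $2pm$ --- recalling that, as elsewhere in the paper, $p$ is a prime with $p\nmid 2m$, so if $q\mid 2pm$ then $q$ divides exactly one of $p$ and $m$.

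For $q\nmid 2pm$ I would simply invoke \cite{AL}: written as $px^4+(-m)y^4=z^2$, the quartic then has a point over $\Q_q$, and the substitution $(X,Y,Z)\mapsto(q^NX,\,q^NY,\,q^{2N}Z)$, which preserves the equation, turns it into a nonzero point of $\Z_q^3$ once $N$ is large. The substantive case is $q\mid m$ (so $q\nmid p$). Here I reduce $pa^2-mb^2=c^2$ modulo $q$ to get $pa^2\equiv c^2\pmod q$, and argue that $q\nmid a$: if $q\mid a$ then $q\mid c$ as well, so $q^2\mid pa^2-c^2=mb^2$; but $m$ is squarefree (in the case at hand $\Delta=-4m$ is a fundamental discriminant), hence $q\parallel m$, while $q\nmid b$ by primitivity of $(a,b,c)$ --- giving $q\parallel mb^2$, a contradiction. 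With $q\nmid a$, $p\equiv(c/a)^2\pmod q$ is a nonzero square mod $q$, so $\sqrt p\in\Z_q^\times$ by Hensel's lemma ($q$ odd), and $(X,Y,Z)=(1,0,\sqrt p)$ is a nonzero $\Z_q$-solution. The case $q\mid p$ is symmetric: modulo $q$ one gets $-mb^2\equiv c^2$, one checks $q\nmid b$ (if $q\mid b$ then $q\mid c$ and $q^2\mid pa^2$, impossible since $p$ is prime and $(a,b,c)$ is primitive), and then $(0,1,\sqrt{-m})$ works.

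I expect the only real obstacle to be the claim $q\nmid a$ in the case $q\mid m$: this is precisely the one spot where rational solvability of the conic is not by itself enough, and one must use that $m$ is squarefree (equivalently, that $\Delta=-4m$ is fundamental). That a hypothesis of this kind cannot be dropped is already visible in Remark~2, where solutions exist exactly because the condition $\gcd(x,f)=1$ of Prop.~\ref{Pmain} fails. The only other point needing care is the passage, when $q\nmid 2pm$, from a projective point over $\Q_q$ to an integral one, which is the elementary rescaling above; everything else is bookkeeping with congruences.
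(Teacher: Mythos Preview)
Your argument is the paper's own: invoke \cite{AL} for $q\nmid 2pm$, and for odd $q\mid m$ (resp.\ $q=p$) deduce from the conic that $(\frac{p}{q})=+1$ (resp.\ $(\frac{-m}{p})=+1$), then exhibit $(1,0,\sqrt{p})$ (resp.\ $(0,1,\sqrt{-m})$). Where you go further is in spelling out, via a primitive integral solution, \emph{why} $(\frac{p}{q})=+1$ follows, and in flagging that this step needs $q\parallel m$. You are right to flag it: for $p=5$, $m=9$ the conic $5x^2-9y^2=z^2$ has the primitive point $(3,1,6)$, yet $(\frac{5}{3})=-1$ and a short valuation argument shows $5x^4-9y^4=z^2$ has no nonzero $\Z_3$-point. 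So the proposition as literally stated fails without some squarefreeness hypothesis, and the paper's own proof silently uses one too. Your phrasing ``$\Delta=-4m$ fundamental'' imports a bit more than the proposition actually asserts, but the substantive observation is correct.
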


\begin{proof}
By classical results (see \cite{AL} for a very simple and elementary
proof), $px^4 - my^4 = z^2$ is locally solvable for every prime 
$q \nmid 2pm$. Thus we only have to look at odd primes $q \mid mp$.

From the solvability of $px^2 - my^2 = z^2$ we deduce that 
$(\frac pq) = +1$ for every odd prime $q \mid m$. But then 
$\sqrt{p} \in \Z_q$, and we can solve $px^4 - my^4 = z^2$ 
simply by taking $(x,y,z) = (1,0,\sqrt{p})$.
Moreover, from $(\frac{-m}p) = +1$ we deduce that $\sqrt{-m} \in \Z_p$,
hence $(x,y,z) = (0,1,\sqrt{-m}\,)$ is a $p$-adic solution of
$px^4 - my^4 = z^2$.
\end{proof}

Thus the quartic $px^4 - my^4 = z^2$, where $p, m > 0$, will be 
everywhere locally solvable if and only if it is solvable in the 
$2$-adic integers $\Z_2$. Now we claim

\begin{prop}
If $m \equiv 2 \bmod 4$, P\'epin's equations are solvable in $\Z_2$,
hence have solutions in every completion of $\Q$.
\end{prop}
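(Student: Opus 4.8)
The plan is to use the reduction already made above — where it is shown that, for $p,m>0$, the quartic $px^4-my^4=z^2$ is everywhere locally solvable as soon as it has a solution in $\Z_2$ — and then to produce such a $2$-adic solution by an elementary analysis of squares in $\Z_2$. So the whole task is to exhibit $x,y,z\in\Z_2$ with $px^4-my^4=z^2$.

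The input from the hypotheses is purely $2$-adic. Since the class $[Q]$ is a square in $\Cl^+(-4m)$, the form $Q$ lies in the principal genus, hence is $\Z_2$-equivalent to the principal form $Q_0=(1,0,m)$; as $p$ is represented by $Q$ over $\Z$, it follows that $Q_0$ represents $p$ over $\Z_2$, say $p=b^2+mc^2$ with $b,c\in\Z_2$. Because $m$ is even and $p$ is odd, reduction modulo $2$ forces $b$ to be a unit of $\Z_2$, so $b^2\equiv 1\bmod 8$. Now I would split according to the valuation of $c$ (writing $m=2m_0$ with $m_0$ odd): if $v_2(c)\ge 1$ then $v_2(mc^2)\ge 3$, so $p\equiv 1\bmod 8$, whence $p$ is a square in $\Z_2$ and $(x,y,z)=(1,0,\sqrt p)$ works; if $c$ is a unit then $c^2\equiv 1\bmod 8$, so $p\equiv m+1\bmod 8$ and thus $p-m\equiv 1\bmod 8$ is a square in $\Z_2$, so $(x,y,z)=(1,1,\sqrt{p-m})$ works. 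Either way we obtain the desired $\Z_2$-point, and the reduction above then yields solvability in every completion of $\Q$.

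The only step that is not entirely mechanical — and so the main (though modest) obstacle — is justifying that ``$[Q]$ a square'' gives a representation $p=b^2+mc^2$ over $\Z_2$. This is the local description of the principal genus: forms in one genus are $\Z_\ell$-equivalent for all primes $\ell$, hence represent the same elements over each $\Z_\ell$; it is the same fact used over $\Q$ in the proof of Proposition~\ref{Pmain}, and I would invoke it at $\ell=2$. Granting that, the argument uses nothing more than the characterisation of squares among $2$-adic units ($u$ is a square iff $u\equiv 1\bmod 8$), and the valuation split on $c$ handles the two residue classes $m\equiv 2$ and $m\equiv 6\bmod 8$ without extra work.
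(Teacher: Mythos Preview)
Your proof is correct and follows essentially the same line as the paper's: both use that $Q$ lies in the principal genus to pin down $p \bmod 8$, and then exhibit the $\Z_2$-point $(1,0,\sqrt{p}\,)$ when $p\equiv 1\bmod 8$ and $(1,1,\sqrt{p-m}\,)$ otherwise. The only difference is cosmetic: the paper splits into the subcases $m\equiv 2$ and $m\equiv 6\bmod 8$ and reads off the admissible residues of $p$ directly from the values of $Q_0=(1,0,m)$ modulo $8$, whereas you obtain the same dichotomy more uniformly by invoking $\Z_2$-equivalence of forms in a genus to write $p=b^2+mc^2$ over $\Z_2$ and then splitting on $v_2(c)$.
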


\begin{proof}
Assume first that $m \equiv 2 \bmod 8$. Then $Q_0 = (1,0,m)$ 
(and therefore any form in the principal genus) represents 
primes $p \equiv 1, 3 \bmod 8$. If $p \equiv 1 \bmod 8$,
then $p \cdot 1^4 - m \cdot 0^4 \equiv 1 \bmod 8$ is a square in
$\Z_2$, and if $p \equiv 3 \bmod 8$, then 
$p\cdot 1^4 - m \cdot 1^4 \equiv 3 - 2 \equiv 1 \bmod 8$
is a square in $\Z_2$.

Assume next that $m \equiv 6 \bmod 8$. 
Then $Q_0 = (1,0,m)$ represents only primes $p \equiv \pm 1 \bmod 8$;
the same holds for all forms in the principal genus, in particular
the form $Q$ represents only primes $p \equiv \pm 1 \bmod 8$.
For primes $p \equiv 1 \bmod 8$, the element $p\cdot 1^4 - m \cdot 0^4$
is a square in $\Z_2$, so $px^4 - my^4 = z^2$ is solvable in $\Z_2$.
If $p \equiv 7 \bmod 8$, then  
$p\cdot 1^4 - m \cdot 1^4 = p-m \equiv 7 - 6 \equiv 1 \bmod 8$
is a square in $\Z_2$.
\end{proof}

A similar but simpler proof yields

\begin{prop}
If $m \equiv 0 \bmod 8$, P\'epin's equations are solvable in $\Z_2$,
hence have solutions in every completion of $\Q$.
\end{prop}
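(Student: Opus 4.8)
The plan is to mimic the proof of the preceding proposition, but here the situation collapses to a single case because the hypothesis $m\equiv 0\bmod 8$ forces the represented prime to be $\equiv 1\bmod 8$, and then $px^4-my^4=z^2$ visibly has the $2$-adic point $(x,y,z)=(1,0,\sqrt p\,)$. Concretely I would proceed as follows. First, note that since $[Q]$ is a square in $\Cl^+(-4m)$, the form $Q$ lies in the principal genus, so the prior results apply: by genus theory $px^2-my^2=z^2$ is solvable in $\Q$ (cf. the Lemma preceding Prop.~\ref{Pmain}), hence by the earlier proposition the quartic $px^4-my^4=z^2$ is solvable in $\Z_q$ for every odd prime $q$, and it has the real point $(1,0,\sqrt p\,)$. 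So everything reduces to producing a solution in $\Z_2$. Second, I would establish the congruence $p\equiv 1\bmod 8$ (see below); then $\sqrt p\in\Z_2$ and $(x,y,z)=(1,0,\sqrt p\,)$ is the desired $2$-adic solution. Assembling these gives solvability in every completion of $\Q$.

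The congruence is where the argument lives. The cleanest route: forms in the same genus are $\Z_2$-equivalent, so the principal-genus form $Q$ is $\Z_2$-equivalent to the principal form $(1,0,m)$; since $Q$ represents the prime $p$ over $\Z$, and hence over $\Z_2$, the form $(1,0,m)$ also represents $p$ over $\Z_2$, say $p=x^2+my^2$ with $x,y\in\Z_2$. Because $p$ is odd while $8\mid m$ (so $my^2\equiv 0\bmod 8$), the coordinate $x$ cannot be divisible by $2$; hence $x\in\Z_2^\times$, $x^2\equiv 1\bmod 8$, and therefore $p\equiv 1\bmod 8$. Equivalently, in classical language: when $8\mid m$ the generic characters of $\Cl^+(-4m)$ include both $n\mapsto(-1)^{(n-1)/2}$ and $n\mapsto(-1)^{(n^2-1)/8}$, and both vanish on every integer represented by the principal genus, so $p\equiv 1\bmod 8$.

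The one step needing care is exactly this congruence, i.e.\ the assertion that a prime represented by a principal-genus form of discriminant $-4m$ with $8\mid m$ must be $\equiv 1\bmod 8$. Here one should be mindful that $-4m$ need not be fundamental, so ``principal genus'' means the genus of the principal form of the (possibly nonfundamental) discriminant $-4m$, and that the local-equivalence characterization of genera used above must be invoked for forms rather than for fundamental discriminants; alternatively one can avoid this by a direct elementary check that any form $(A,B,C)$ of discriminant $-4m$ with $8\mid m$ and trivial generic characters represents only odd integers $\equiv 1\bmod 8$. Once the congruence is in hand, the $2$-adic square root, the real point, and the assembly with the earlier propositions are all immediate — which is precisely why this is ``a similar but simpler proof'' than the previous one.
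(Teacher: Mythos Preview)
Your proof is correct and follows essentially the same approach as the paper: the paper's entire argument is the one-line remark that primes represented by a form in the principal genus (for $8\mid m$) are $\equiv 1\bmod 8$, whence $2$-adic solvability is trivial via $(1,0,\sqrt{p})$. You have simply supplied the justification for that congruence (via local equivalence of forms in a genus, or equivalently via the genus characters $\delta,\eps$) that the paper leaves implicit.
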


In fact, primes represented by a form in the principal genus are
$\equiv 1 \bmod 8$, and for these, showing the $2$-adic solvability 
is trivial. Similarly we can show

\begin{prop}
If $m \equiv 1 \bmod 8$, then $p \equiv 1 \bmod 4$, and P\'epin's 
equations are solvable in $\Z_2$ (hence in every completion of $\Q$) 
if $p \equiv 1 \bmod 8$.
\end{prop}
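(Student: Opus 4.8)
The plan is to establish the two assertions in turn, each by a short genus-theoretic observation followed by a congruence modulo a power of $2$.

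First I would treat the congruence $p \equiv 1 \bmod 4$. Since by definition the principal genus of $\Cl^+(\Delta)$ is the set of square classes, the standing hypothesis that $[Q]$ is a square says exactly that $Q$ lies in the principal genus. Then, just as in the lemma at the start of this subsection and in Case~(5) of the proof of Theorem~\ref{TPepG}, a prime $p$ represented by a form in the principal genus is represented rationally by the principal form; clearing denominators and cancelling common factors yields a relation $pa^2 = b^2 + mc^2$ with $a$ odd. Because $m \equiv 1 \bmod 4$ this gives $p \equiv pa^2 \equiv b^2 + c^2 \bmod 4$, and since $p$ is odd (we have $p \nmid \Delta$) the right-hand side must be $\equiv 1 \bmod 4$. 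Hence $p \equiv 1 \bmod 4$.

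For the second assertion, assume in addition that $p \equiv 1 \bmod 8$. Then $p$ is a square in $\Z_2^\times$, so $(x,y,z) = (1,0,\sqrt{p}\,)$ solves $px^4 - my^4 = z^2$ in $\Z_2$. Solvability at the remaining places is already available: the rational solvability of $px^2 - my^2 = z^2$ (a consequence of genus theory, exactly as in the lemma above) gives, by the earlier proposition reducing odd-place solvability to the quadratic, solutions in $\Z_q$ for every odd prime $q$, while $p, m > 0$ gives the real solution $(1,0,\sqrt{p}\,)$. Thus P\'epin's equation is solvable in every completion of $\Q$.

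The argument is elementary and I expect no serious obstacle; the one step deserving a word of care is the passage to a representation $pa^2 = b^2 + mc^2$ with $a$ odd. If $a$ were even in some such representation, then $b^2 + mc^2 \equiv 0 \bmod 4$ would force $b$ and $c$ even (using $m \equiv 1 \bmod 4$), and dividing the relation through by $4$ strictly decreases $a^2 + b^2 + c^2$; so a representation of minimal size automatically has $a$ odd, and the mod-$4$ reduction above goes through.
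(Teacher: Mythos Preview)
Your proof is correct and follows essentially the same route as the paper: obtain a rational representation $pa^2 = b^2 + mc^2$ with $a$ odd, reduce modulo $4$ using $m \equiv 1 \bmod 4$ to get $p \equiv 1 \bmod 4$, and for $p \equiv 1 \bmod 8$ take the $2$-adic solution $(1,0,\sqrt{p}\,)$. The only cosmetic difference is that the paper asserts directly that the denominator $t$ can be taken coprime to $4m$, whereas you supply the small descent argument showing $a$ may be taken odd; both are fine.
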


\begin{proof}
Since $p$ is represented by some form in the principal genus, 
we must have $pt^2 = r^2 + ms^2$ for integers $r, s, t$, with
$t$ coprime to $4m$. This implies that $t$ is odd, and that 
$r$ or $s$ is even, hence we must have $p \equiv pt^2 \equiv 1 \bmod 4$
as claimed.

We have to study the solvability of $px^4 - my^4 = z^2$ in $\Z_2$.
If $p \equiv 1 \bmod 8$, then $\sqrt{p} \in \Z_2$, and we have
the $2$-adic solution $(x,y,z) = (1,0,\sqrt{p}\,)$.
\end{proof}

Thus although not all of P\'epin's examples are counterexamples
to Hasse's Local-Global Principle, his construction gives infinite
families of equations $px^4 - my^4 = z^2$ which have local solutions
everywhere but only the trivial soltion in integers. As is well known, 
such equations represent elements of order $2$ in the Tate-Shafarevich
group of certain elliptic curves, and it is actually quite easy
to use P\'epin's construction to find Tate-Shafarevich groups with
arbitrarily high $2$-rank (see \cite{LGr} for the history and a direct 
elementary proof of this result).

\section*{Acknowledgement}

I would like to thank the referee for several helpful comments.

\end{document}